\DeclareMathAlphabet{\mathbbm}{U}{bbm}{m}{n}
\tikzset{ext/.style={circle, draw,inner sep=1pt},int/.style={circle,draw,fill,inner sep=1pt},nil/.style={inner sep=1pt}}
\tikzset{exte/.style={circle, draw,inner sep=3pt},inte/.style={circle,draw,fill,inner sep=3pt}}
\tikzset{diagram/.style={matrix of math nodes, row sep=3em, column sep=2.5em, text height=1.5ex, text depth=0.25ex}}
\tikzset{diagram2/.style={matrix of math nodes, row sep=0.5em, column sep=0.5em, text height=1.5ex, text depth=0.25ex}}
\theoremstyle{plain}
  \newtheorem{thm}{Theorem}[section]
  \newtheorem{defi}[thm]{Definition}
  \newtheorem{prop}[thm]{Proposition}
  \newtheorem{cor}[thm]{Corollary}
  \newtheorem{lemma}[thm]{Lemma}
\theoremstyle{definition}
  \newtheorem{rem}[thm]{Remark}
\newcommand{\R}{{\mathbb{R}}}
\newcommand{\K}{{\mathbb{K}}}
\newcommand{\Q}{{\mathbb{Q}}}
\newcommand{\op}{\mathcal}
\newcommand{\SC}{\mathsf{SC}}
\newcommand{\ESC}{\mathsf{ESC}}
\newcommand{\Com}{\mathsf{Com}}
\newcommand{\FM}{\mathsf{FM}}
\newcommand{\bpm}{\begin{pmatrix}}
\newcommand{\epm}{\end{pmatrix}}
\newcommand{\e}{\mathsf{e}}
\newcommand{\La}{\Lambda}
\newcommand{\lD}{\mathsf{D}}
\newcommand{\dESC}{\mathsf{ESC}^c}
\newcommand{\bo}{{\mathbbm{1}}}
\newcommand{\lDM}{{\mathsf{M}}}
\newcommand{\mT}{{\mathcal{T}}}
\newcommand{\mC}{{\mathcal{C}}}
\newcommand{\id}{\mathit{id}}
\newcommand{\aor}{\rcirclearrowleft}
\newcommand{\aol}{\lcirclearrowright}
\newcommand{\beq}[1]{\begin{equation}
\label{#1}
}
\newcommand{\eeq}{\end{equation}}
\begin{document}
\title{ (Non-)formality of the extended Swiss Cheese operads }


%
\author{Thomas Willwacher}
\address{Department of Mathematics \\ ETH Zurich \\  
R\"amistrasse 101 \\
8092 Zurich, Switzerland}
\email{thomas.willwacher@math.ethz.ch}
%

\thanks{The author acknowledges partial support by the Swiss National Science Foundation (grant 200021\_150012 and the SwissMap NCCR). This work has been partially funded by the European Research Council, ERC StG 678156--GRAPHCPX}


\begin{abstract}
We study two colored operads of configurations of little $n$-disks in a unit $n$-disk, with the centers of the small disks of one color restricted to an $m$-plane, $m<n$.
We compute the rational homotopy type of these \emph{extended Swiss Cheese operads} and show how they are connected to the rational homotopy types of the inclusion maps from the little $m$-disks to the little $n$-disks operad.
\end{abstract}

\maketitle

\section{Introduction}

The little $n$-disks operad $\lD_n$ is a topological operad consisting of configurations of $n$-dimensional "little" disks inside the unit disk. It plays an important role in a variety of areas of mathematics.
In this paper we consider a two-colored extension of this operad, the extended Swiss Cheese operad $\ESC_{m,n}$.
The operations of output color one of this operad are identified with $\lD_n$. The operations in output color two are again configurations of non-intersecting $n$-dimensional disks in the unit $n$-disk, but now one distinguishes two kinds of disks, of input colors one and two: the disks of input color one can be freely placed anywhere in the disk, while the centers of the disks of input color two are restricted to lie on a fixed $m$-dimensional plane in the unit disk as shown below.
\[
\begin{tikzpicture}
\draw (0,0) circle (3);
\draw (-3,0) -- (3,0);
\draw (-2,0) circle (.5) (0,0) circle (.2) (2,0) circle (.3);
\draw (0,2) circle (.5) (-.5,-1.5) circle (.8) ;
\node at (-2,0) {1};
\node at (0,0) {2};
\node at (2,0) {3};
\node at (0,2) {1};
\node at (-0.5,-1.5) {2};
\end{tikzpicture}
\]
The operadic composition morphisms are defined by gluing one configuration of disks in place of one little disk of another configuration, just as for the little disks operad.
The goal of this paper is to study the rational homotopy type of the operads $\ESC_{m,n}$. Our main result will be the following.

\begin{thm}\label{thm:main}
The operads $\ESC_{m,n}$ are rationally formal for $n>m-1$ and $m\geq 1$. The corresponding operads of real chains $C(\ESC_{m,n})$ are not formal for $n=m-1$ and $m\geq 1$.
\end{thm}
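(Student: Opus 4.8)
The plan is to reduce both halves of the statement to a combinatorial computation. Following Kontsevich, Lambrechts--Vol\'ic and the author's earlier work on $\lD_n$, one produces a two-coloured Hopf cooperad of graphs $\SGraphs_{m,n}$ modelling $\ESC_{m,n}$: the colour-one part is the standard graphs model $\Graphs_n$ for $\lD_n$, while the colour-two part consists of graphs with internal vertices and two species of external vertices --- "aerial" ones, recording the freely placed (input colour one) disks, and "terrestrial" ones, recording the (input colour two) disks on the $m$-plane --- the edges carrying degree $n-1$ or $m-1$ according to the types of their endpoints, with differential given by vertex splitting. That $\SGraphs_{m,n}$ is a real model for $C(\ESC_{m,n})$ is proven, as usual, via the Kontsevich configuration-space integral $I\colon \SGraphs_{m,n}\to\Omega_{PA}(\ESC_{m,n})$: the vanishing lemmas for "hidden" boundary faces survive the presence of the $m$-plane, and a Leray--Hirsch argument identifies $H(\ESC_{m,n})$ with the combinatorially expected algebra, through which $I$ is a quasi-isomorphism. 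By the descent of (non-)formality from $\R$ to $\Q$ for operads it then suffices to work over $\R$, so everything reduces to the dg cooperad $\SGraphs_{m,n}$ and the quotient map $p\colon\SGraphs_{m,n}\to H(\ESC_{m,n})$ onto graphs without internal vertices: the operad is formal if $p$ is a quasi-isomorphism, whereas detecting non-formality will require exhibiting a genuine obstruction class, the subject of the last step below.

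The first real point is that $p$ is a quasi-isomorphism precisely when $n>m-1$, i.e.\ that the subcomplex of $\SGraphs_{m,n}$ spanned by graphs carrying at least one internal vertex is acyclic in that range. I would filter by the number of external vertices so that on the associated graded only the part of the differential internal to the internal/terrestrial block survives, and then run the standard contractible-pair argument adapted to the colours: cancel a graph having an internal vertex of valence one (an "antenna") against the graph obtained by contracting that antenna. The configurations for which this pairing fails to close up are governed by a one-plane graph complex $\fGC_{m,n}$, namely internal vertices sitting on the $m$-plane with only terrestrial neighbours, and an elementary degree count --- of exactly the kind familiar from the $\lD_n$ case, now complicated by the interplay of the two edge degrees $n-1$ and $m-1$ together with Euler-characteristic constraints --- shows that $\fGC_{m,n}$ is concentrated away from the obstructing degree precisely when $n\ge m$. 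Carrying out this bookkeeping is the main calculational obstacle of the formality half, but I expect no conceptual surprises there.

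For the non-formality at $n=m-1$ I would keep the same model. Since by $I$ the dg cooperad $\SGraphs_{m,n}$ is a real model for $C(\ESC_{m,n})$, it is enough to show that it is not quasi-isomorphic, through any zig-zag, to $H(\ESC_{m,n})$ with zero differential. The homotopy type of such an object is governed by the deformation complex $\Def\bigl(H(\ESC_{m,n})\xrightarrow{\id}H(\ESC_{m,n})\bigr)$, which in the present Koszul situation is identified with a hairy graph complex $\HGC_{m,n}$; the real model determines a Maurer--Cartan element $\mu$ there, and it is formal iff $\mu$ is gauge-trivial, the primary obstruction being the cohomology class $[\mu_1]\in H(\HGC_{m,n})$ of its leading term. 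Exactly at $n=m-1$ the degree count of the previous paragraph fails by one unit, so $\HGC_{m,n}$ acquires a one-dimensional class $\xi$ in the relevant degree, represented by a small explicit graph $\Gamma_0$ --- heuristically one internal vertex on the $m$-plane joined to one aerial and one terrestrial external vertex. Because $H(\HGC_{m,n})$ is one-dimensional in that degree, $[\mu_1]$ is nonzero as soon as the coefficient of $\Gamma_0$ in $\mu$ is nonzero, and this coefficient is a single Kontsevich--Bott--Taubes period $\int_{C}\omega_{\Gamma_0}$ over a low-dimensional configuration space. Establishing that this period does not vanish --- by evaluating the low-dimensional integral directly, or by a parity/positivity argument on the integrand --- is the genuine crux of the non-formality half, and of the theorem as a whole. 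The resulting obstruction is the operadic analogue of Livernet's Massey-product obstruction for the classical Swiss Cheese operad, with the new feature that it is now visible only through a configuration-space period.
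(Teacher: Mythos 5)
Your strategy differs fundamentally from the paper's. You propose to construct an explicit graph-complex model $\SGraphs_{m,n}$ for $C(\ESC_{m,n})$ via configuration-space integrals and then to decide (non-)formality directly inside that model by degree-counting and a period computation. The paper builds no such model. It observes that $\ESC_{m,n}$ is, up to homotopy, a pullback of the inclusion $f\colon\lD_m\to\lD_n$ over a ``paint/forget'' diagram of two-coloured operads built functorially from any map of $\La$-operads, so that a Hopf-cooperadic pushout of a model of $f$ models $\ESC_{m,n}$. Formality for $n-m\geq 2$ then follows directly from the relative intrinsic formality of $f$ established in \cite{FW}. Non-formality in codimension one is obtained by running the construction backwards: by the operadic-torsor machinery of the Appendix, a two-coloured operad of Swiss-Cheese type \emph{encodes} the homotopy class of the operad map $C(\lD_{n-1})\dashrightarrow C(\lD_n)$, and quasi-isomorphic such operads encode quasi-isomorphic maps (Proposition~\ref{prop:mapnonformal}); hence formality of $C(\ESC_{n-1,n})$ would force formality of the map, contradicting \cite{TW}. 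What the paper's route buys is exactly this: both halves reduce to the already-known (non-)formality of the map $\lD_m\to\lD_n$, with no new vanishing lemmas or period evaluations.

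There are also concrete gaps in the non-formality half of your proposal. You claim the obstruction class in $\HGC_{m,n}$ is represented by a graph $\Gamma_0$ with one internal vertex and \emph{two} external legs (one aerial, one terrestrial), and that detecting it reduces to evaluating a single low-dimensional Bott--Taubes period. But the obstruction to formality of the map $\lD_{n-1}\to\lD_n$ --- which is what the hairy graph complex controls, as the deformation complex of that map --- appears only in arity~$4$; this is precisely the content of \cite{TW} and is recalled in the Remark following Corollary~5.5 of the paper. A two-legged graph cannot carry it, and you appear to be conflating this with Livernet's arity-$2$ obstruction for the classical $\SC_n$, which is a Massey-product argument, not a period. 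You also identify $\Def\bigl(H(\ESC_{m,n})\xrightarrow{\id}H(\ESC_{m,n})\bigr)$ with a hairy graph complex without accounting for the remaining pieces of the deformation complex of a two-coloured operad (the colour-$1$ and colour-$2$ endomorphism parts), and you explicitly leave the nonvanishing of the period unproved, flagging it as the ``genuine crux.'' As written, the non-formality half does not close, whereas the paper's torsor argument makes the period computation unnecessary. On the formality side your argument is plausible in outline but would only give real formality; the claim of \emph{rational} formality in Theorem~\ref{thm:main} would require justifying the descent step for coloured Hopf cooperads, whereas the paper works with a genuinely rational model of $f$ via the functor $\Omega_\#$.
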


By the first statement we mean that the (homotopy) cooperad of Sullivan's differential forms $\Omega_{PL}(\ESC_{m,n})$ can be connected to its cohomology by a zigzag of quasi-isomorphisms.

We will show the statements of Theorem \ref{thm:main} by linking the operads $\ESC_{m,n}$ to the rational homotopy theory of maps $\lD_m\to \lD_n$, and in particular derive the above result from the (non-)formality of the maps $\lD_m\to \lD_n$ shown in \cite{FW} and \cite{TW}.

\medskip

\subsection*{Remark about history and nomenclature.}
Although the definition is quite natural, we are not aware of appearances of the extended Swiss Cheese operads $\ESC_{m,n}$ in the literature. However, at least in codimension $n-m=1$ they have been invented by V. Turchin (personal communication), in connection with the study of the spaces of framed long non-$k$-overlapping immersions of $\R^m$ into $\R^n$, cf. \cite{DT}.
Concretely, there is an action of $\ESC_{m (m+1)}$ on the pair consisting of framed long embeddings and such immersions, akin to the action of the little disks operads on long embeddings as described by Budney \cite{Budney}.
The name "extended Swiss Cheese operad" is also due to Turchin.

\subsection*{Acknowledgements}
The author is very grateful for helpful and encouraging discussions with B. Fresse, A. Khoroshkin and V. Turchin.

\section{Notation and preliminaries}\label{sec:notation}
\subsection{Basic notions}
We generally work over a field $\K$ of characteristic zero, and more concretely either over $\K=\Q$ for the formality part of Theorem \ref{thm:main} and over $\K=\R$ for the non-formality part.

We use cohomological conventions, i.e., a differential graded (dg) vector space for us is a $\K$-vector space with a potentially unbounded grading, with differential of degree $+1$.
We abbreviate the notion of dg commutative algebra by dgca.
Note that our dgcas need not be concentrated in positive degrees unless otherwise stated.

\subsection{$\La$-operads and cooperads}
For an introduction to (co)operads we refer the reader to the textbook \cite{lodayval}, whose conventions we mostly follow.

We also recall the notion of $\La$-operad, cf. \cite{Frbook}.
Concretely, a $\La$-operad in topological spaces $\op T$ is a topological operad without operations in arity 0, but with maps $\iota_j: \op T(r+1)\to \op T(r)$, $j=1,\dots,r$.
These maps are required to satisfy axioms such that there is a natural operad structure on the spaces
\[
\op T_*(r) =
\begin{cases}
\op T(r) & \text{for $r\geq 1$} \\
* & \text{for $r=0$},
\end{cases}
\]
with the operadic insertion of the arity zero operation in slot $j$ being defined to be $\iota_j$.
In other word, the notion of $\La$ operad is merely an alternative formalization of an operad whose space of operations of arity zero is a point.

We call a cooperad in the category of dgcas a Hopf cooperad, in agreement with the literature.
A dg $\La$-Hopf cooperad is a cooperad in the category of dgcas, together with a (co-)$\La$ structure, i.e., maps $\op C(r)\to \op C(r+1)$ satisfying axioms dual to those for the maps $\iota_j$ above.
We refer the reader to \cite{Frbook} or the introductory sections of \cite{FW} for more details.

\subsection{Rational homotopy theory for operads}
For a topological space $X$ we shall denote the Sullivan differential graded commutative algebra (dgca) of polynomial differential forms (with rational coefficients) by
\[
\Omega_{PL}(X).
\] 
For good (e.g., simply connected, finite type) spaces $X$, we may recover the rational homotopy type of $X$ from the quasi-isomorphism type of $\Omega_{PA}(X)$, cf. \cite[Theorem 7.3.5]{Frbook}.

Now consider a topological operad $\op T$. Unfortunately, since the functor $\Omega_{PL}$ is lax monoidal, but not oplax monoidal, the collection of dgcas $\Omega_{PL}(\op T)$ does not immediately form a cooperad in the category of dgcas.
There are essentially two approaches to overcome this difficulty.
The first, due to Fresse \cite{Frbook}, \cite{Frextended} is to replace the functor $\Omega_{PL}(-)$ by a(n essentially quasi-isomorphic) functor $\Omega_\sharp(-)$ which sends (reduced) topological operads to cooperads in dgcas.
The second approach is to relax the notion of cooperad to homotopy cooperad.
Both approaches have advantages and disadvantages. Since for the present paper the precise framework one works in is not too relevant, let us briefly recall both approaches.
We will comment later how our constructions can be conducted in each framework.

\subsubsection{Fresse's rational homotopy theory for operads}\label{sec:rhtFresse}
\newcommand{\sSetOp}{\mathrm{sSet}\La\mathrm{Op}}
\newcommand{\dgsHopfcoOp}{\mathrm{dg}^*\La\mathrm{HopfOp}^c}
Following \cite{Frbook} we consider the category $\sSetOp$ of reduced $\La$-operads in simplicial sets, and the category $\dgsHopfcoOp$ of reduced, non-negatively graded dg $\La$-Hopf cooperads.
The Sullivan realization functor $G_\bullet$ from dg commutative algebras to simplicial sets is symmetric monoidal and hence induces a functor 
\[
G_\bullet: \dgsHopfcoOp \to \sSetOp^{op} .
\]
One then defines the functor 
\[
\dgsHopfcoOp \leftarrow \sSetOp^{op} : \Omega_\sharp
\]
to be the right adjoint to $G_\bullet$, see \cite[Proposition II.12.1.2]{Frbook} for the explicit construction.
Arity-wise, the functor $\Omega_\sharp$ is quasi-isomorphic to $\Omega_{PL}$ under mild conditions (cd \cite[Theorem 2.3]{Frextended}) and shall serve as our operadic upgrade of the latter functor.

Furthermore, one may define model structures on the categories of $\sSetOp$ and $\dgsHopfcoOp$ such that the functors $G_\bullet$ and $\Omega_\#$ form a Quillen adjunction.
We refer the reader to \cite{Frbook} or to the concise recollection of the introductory sections of \cite{FW} for the precise definitions.

For one later application we just mention the following property.
\begin{lemma}[B. Fresse, private communication]\label{lem:cofibrant}
Suppose that $\op C$ is a cofibrant dg $\La$ Hopf cooperad. Then the morphisms 
\[
\op C(r)\to\op C(r+1) 
\]
which define the $\La$ structure are cofibrations of dg commutative algebras.
\end{lemma}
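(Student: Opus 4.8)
The plan is to follow the argument of \cite{Frbook} showing that the terms $\op C(r)$ of a cofibrant dg $\La$ Hopf cooperad are cofibrant as dg commutative algebras, while additionally keeping track of the $\La$-structure maps. Since cofibrations of dgcas are closed under retracts, and every cofibrant dg $\La$ Hopf cooperad is a retract of a \emph{cellular} one --- that is, one obtained from the initial object by a (possibly transfinite) composition of elementary cell attachments, as analysed in \cite{Frbook} --- it suffices to treat the cellular case. For the initial reduced dg $\La$ Hopf cooperad one has $\op C(r)=\K$ in every arity and the maps $\op C(r)\to\op C(r+1)$ are the identity of $\K$, so the statement holds there; since cofibrations of dgcas are stable under transfinite composition, we are reduced to the following claim: if all $\La$-maps $\op C(r)\to\op C(r+1)$ of a cellular $\op C$ are cofibrations of dgcas, then so are those of the pushout $\op C'$ along a single elementary cell attachment.

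According to the analysis of \cite{Frbook}, such an elementary attachment is, arity by arity, the free adjunction of a graded set of polynomial generators: as graded commutative algebras $\op C'(r)\cong\op C(r)\otimes\La(G_r)$, where each new generator is born in a minimal arity $r_0$, where $G_r$ consists of the generators born in arity $\le r$, and where the cocompositions and the $\La$-maps of $\op C'$ are determined in a triangular fashion by those of $\op C$ together with the combinatorics of the cell. The one feature we need is that, modulo $\op C(r)$, the $\La$-map $\op C'(r)\to\op C'(r+1)$ sends $G_r$ injectively onto the subset of $G_{r+1}$ of generators born in arity $\le r$. Granting this, $\op C'(r)\to\op C'(r+1)$ factors as the cobase change of $\op C(r)\to\op C(r+1)$ along $\op C(r)\to\op C'(r)$ --- a cofibration by the inductive hypothesis --- followed by the free adjunction of the generators of $G_{r+1}$ born in arity exactly $r+1$, which is a relative Sullivan extension (the differentials of the new generators landing, by the triangularity, in the part already constructed) and hence again a cofibration. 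Thus $\op C'(r)\to\op C'(r+1)$ is a cofibration of dgcas, and the induction closes.

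The main obstacle is to establish the displayed structural statement, i.e.\ that the elementary extensions of \cite{Frbook} genuinely have this arity-wise polynomial form with the $\La$-maps acting as described. This is where the real work lies: one has to unwind the tree-level description of the (co)free object generated by a $\La$-collection cell and check that the $\La$-structure acts on the generators by adjoining a nullary input, that it is injective on generators, and that the cooperad cocomposition neither creates relations among the new generators nor couples them with the $\La$-action in a way that would destroy the freeness of $\op C(r)\to\op C(r+1)$. All remaining ingredients --- the reduction to cellular objects via retracts, the base case, and the closure of dgca-cofibrations under cobase change and transfinite composition --- are standard.
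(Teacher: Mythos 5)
Your plan follows the same basic skeleton as the paper's proof: reduce to a cellular (or transfinite composite) situation via retracts, handle the base case trivially, and then show that the property ``the $\La$-maps are dgca-cofibrations'' is preserved under transfinite composition and under elementary cell attachments. You also correctly identify the stability-under-retracts and stability-under-composition parts as routine.

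However, you miss the one step that makes the paper's argument short, and you honestly flag the resulting gap yourself. The paper does \emph{not} analyse cell attachments at the level of dg $\La$ Hopf \emph{cooperads} at all. Instead it first applies the forgetful functor to the category of dg $\La$ Hopf \emph{collections} (dropping the cooperad cocomposition and the arity-one operations). This forgetful functor is a left Quillen adjoint, so it preserves cofibrant objects, and the model structure on dg $\La$ Hopf collections is cofibrantly generated with generating cofibrations that are transparently ``free'' adjunctions of generators in the $\La$ Hopf sense. In that category the cell-attachment analysis you describe as ``where the real work lies'' --- that each attachment is arity-wise $\op C'(r)\cong \op C(r)\otimes\La(G_r)$, with the $\La$-maps acting injectively on generators --- becomes essentially a tautology, because the generating cofibrations are by construction free on prescribed generators with prescribed $\La$-action, and there is no cooperad cocomposition around to couple with the new generators. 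Your attempt instead tries to carry this out directly at the cooperad level, where the free/cofree objects are governed by a tree formula and the interaction of the cell with cocomposition is genuinely delicate; you correctly note that you have not verified this, so as written the proof has a real gap. The fix is exactly the paper's: pass through the Quillen adjunction to the collection category before invoking the cellular/cofibrantly-generated machinery.
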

\begin{proof}[Proof sketch]
We may check the statement in the category of dg $\La$ Hopf collections, forgetting the cooperad structure and arity one operations, as the forgetful functor is part of a Quillen adjunction.
The model structure on the category of dg $\La$ Hopf collections is cofibrantly generated and transported from the category of coaugmented dg collections, cf. \cite[Figure 1]{FW} and explanations thereafter.
In particular, the generating cofibrations are "free" morphisms of dg $\La$ Hopf collections.

By standard results for cofibrantly generated model categories \cite[Proposition 2.1.18]{Hovey} any cofibration in such a category can be obtained as a retract of a transfinite composition of pushouts along generating cofibrations.
The desired property in the Lemma is stable under retracts, as is easily checked. 
It is also preserved by taking pushouts under generating cofibrations, and by directed colimits, so we conclude the desired result.
(Every pushout here adds new "free generators" with respect to the $\La$ Hopf structure, with prescribed differential.)
\end{proof}

We finally remark that the definition of model structures and the functor $\Omega_\#$ may also be extended to categories of colored operads.

\subsubsection{Homotopy (co)operads}
We will use the notion of homotopy cooperad proposed by Lambrechts and Voli\'c \cite{LV}, and spelled out in somewhat more detail in \cite{KW}.

To briefly recall the definition, one defines a symmetric monoidal category of forests $\mT$.
The monoidal product is the disjoint union. The morphism are generated by (i) isomorphisms of trees (ii) contraction of vertices and (iii) cutting of edges.
A (non-unital) homotopy operad in the symmetric monoidal category $\mC$ is then a symmetric monoidal functor  
\[
\mT \to \mC.
\]
For example, any operad $\op P$ in $\mC$ gives rise to a homotopy operad by sending the tree $T$ to the tree-wise tensor product
\[
\bigotimes_T \op P,
\]
and using the operadic composition to define the images of the contraction morphisms.

Dually, a (non-counital) homotopy cooperad in $\mC$ is a functor
\[
\mT \to \mC^{op}.
\]
The advantage is that with this definition the lax monoidal functor $\Omega_{PL}(-)$ gives rise to a functor from the category of operads in topological spaces to the category of homotopy cooperads in dgcas. Abusing notation we shall denote this functor by $\Omega_{PL}$ as well.
Concretely, for $\op T$ a topological operad the homotopy cooperad $\Omega_{PL}(\op T)$ sends the tree $T$ to the dgca
\[
\Omega_{PL}(\times_T \op T).
\]

The definitions can be extended easily to the case of colored operads. One simply has to alter the category $\op T$ to consist of colored forests, all of whose edges are colored by one color in the given set of colors.

\subsection{The little $n$-disks operad}\label{sec:little disks}
We shall recall here some well-known facts about the little $n$-disks operads $\lD_n$, referring to \cite{Sinha} for detailed derivations and pointers to the original literature.
We denote the rational homology operad of $\lD_n$ by 
\[
\e_n := H_\bullet(\lD_n)
\]
The cohomology $\e_n^*$ of the little disks operads was computed by Arnold \cite{Arnold} for $n=2$ and by F. Cohen \cite{Co} for higher $n$.
One has the presentation as a dgca
\[
\e_n^*(r) 
=\Q[\omega_{ij}\mid 1\leq i\neq j\leq r] /\sim
\]
where $\omega_{ij}$ is of degree $n-1$ and the relations are 
\begin{align*}
\omega_{ij}= (-1)^n\omega_{ji} \\
\omega_{ij}\omega_{jk}+\omega_{jk}\omega_{ki}+\omega_{ki}\omega_{ij}=0.
\end{align*}

The cooperads $\e_n^*$ are naturally $\La$ cooperads in dgcas, the $\La$-structure being induced by forgetting disks from a configuration of disks. We shall need the following result below.

\begin{lemma}\label{lem:freeness}
The module $\e_n^*(r+k)$ over the dgca $\e_n^*(r)\subset \e_n^*(r+k)$ is free for all $n\geq 1$, $r\geq 0$ and $k\geq 0$.
\end{lemma}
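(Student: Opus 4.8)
The plan is to exhibit an explicit monomial basis of $\e_n^*(r+k)$ as a free module over $\e_n^*(r)$, using the well-known additive basis of $\e_n^*$ in terms of "admissible" monomials in the generators $\omega_{ij}$. Recall (from Cohen's computation, or Arnold's in the case $n=2$) that $\e_n^*(N)$ has a basis indexed by certain forests/sequences of the $\omega$'s: one fixes a total order on $\{1,\dots,N\}$ and picks, for each $j$ in some subset, at most one factor $\omega_{ij}$ with $i<j$, subject to the condition that the resulting set of pairs forms a forest (no cycles), and one takes the product in increasing order of $j$. Concretely, writing the basis elements of $\e_n^*(N)$ as products $\prod_{j} \omega_{i(j)\, j}$ over $j$ ranging in an increasing sequence with $i(j)<j$ and the associated graph a forest, this is the standard Poincaré–Birkhoff–Witt-type basis.

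First I would set up the inclusion $\e_n^*(r)\hookrightarrow \e_n^*(r+k)$ as the sub-dgca generated by $\{\omega_{ij}\mid 1\le i\neq j\le r\}$; this is precisely the image of the $\La$-cooperad structure map, i.e. the map dual to forgetting the last $k$ disks, and it is injective because the said basis of $\e_n^*(r)$ maps to part of the basis of $\e_n^*(r+k)$. Next, order the labels so that $1<2<\dots<r<r+1<\dots<r+k$, and split a basis monomial of $\e_n^*(r+k)$ as a product of a "low part" involving only $\omega_{ij}$ with $i,j\le r$ and a "high part" involving at least one label $>r$. The low part is exactly a basis monomial of $\e_n^*(r)$. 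I would then define the candidate basis $B$ of the module to be the set of high parts that occur, i.e. the monomials $m=\prod_{j>r}\omega_{i(j)\, j}\cdot(\text{factors }\omega_{ij}\text{ with }i\le r<j\text{ already counted})$ — more precisely, the set of admissible monomials all of whose $\omega$-factors involve at least one index in $\{r+1,\dots,r+k\}$. The claim then is that $\e_n^*(r+k)$ is freely generated as an $\e_n^*(r)$-module by $B$: every basis monomial of $\e_n^*(r+k)$ factors uniquely as (low basis monomial) $\cdot$ (element of $B$), since in the chosen order the factors with both indices $\le r$ all precede, and commute past (up to sign $(-1)^n$, irrelevant for a basis), the factors with an index $>r$, and the Arnold relations only ever relate three $\omega$'s sharing indices within a single triangle, hence never mix the low and high parts in a way that breaks the factorization. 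This simultaneously gives spanning and freeness.

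I expect the main obstacle to be the bookkeeping that makes the factorization genuinely unique rather than merely surjective: one must check that the three-term Arnold relation, which is what could in principle destroy linear independence, respects the low/high decomposition, so that reducing any product of $\omega$'s to the standard basis never creates a term whose low part differs from the one we started with. This is a standard but slightly delicate "straightening" argument — best phrased by noting that the quadratic ideal $\sim$ is generated by elements that are homogeneous with respect to the bigrading (number of low factors, number of high factors) once one works on the level of the free graded-commutative algebra, so the relations preserve this bigrading and the PBW normal form computed in the larger algebra restricts to a PBW normal form over the subalgebra. Everything else — injectivity of the inclusion, the explicit description of $B$, the handling of the sign $(-1)^n$ which is harmless for $n$ even and gives a sign-twisted but still free module for $n$ odd, and the edge cases $k=0$ (where $B=\{1\}$) and $r=0$ (where the subalgebra is $\Q$) — is routine. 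For $n=1$ one additionally notes $\omega_{ij}^2=0$ since $\omega_{ij}$ is odd, which only further constrains the basis and does not affect freeness.
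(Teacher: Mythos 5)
Your proposal is correct and follows essentially the same route as the paper: exhibit the standard admissible monomial basis of $\e_n^*(r+k)$ (you index each $\omega$-factor by its higher label and place $\e_n^*(r)$ at the low labels; the paper does the mirror image, indexing by the lower label and placing $\e_n^*(r)$ at the high labels, which is the same argument up to relabeling) and observe that every basis monomial factors uniquely as a basis monomial of $\e_n^*(r)$ times a monomial with all remaining labels in the complementary set. One remark: the ``straightening'' worry about the Arnold relation that you flag as the main obstacle is in fact a non-issue --- once the admissible-monomial basis of $\e_n^*(r+k)$ is taken as given (from Cohen or Sinha), the low/high split of a basis monomial is a purely set-theoretic partition of its index set, so bijectivity of the map $(\text{basis of }\e_n^*(r))\times B\to (\text{basis of }\e_n^*(r+k))$ is immediate and no re-examination of the relations is needed; the paper accordingly dispatches this step in one line.
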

\begin{proof}
 A basis of $\e_n(s)$ is indexed by pairs $(I,f)$, where $I\subset \{1,\dots,s\}$ and $f:I\to \{1,\dots, s\}$ is an increasing function, i.e., $f(i)>i$.
The corresponding monoial is 
\[
 \omega_{I,f} = \prod_{i\in I} \omega_{if(i)}.
\]
(We refer the reader to \cite{Sinha} for this statement and further details.)
For $s=r+k$ we think of $\e_n^*(r)$ as generated by the $\omega_{ij}$, with $i,j\geq k+1$.
Then it is clear that a basis of $\e_n^*(r+k)$ as module over $\e_n^*(r)$ is given by the monomials $\omega_{I,f}$ as above with $I\subset \{1,\dots,k\}$.
\end{proof}

\section{A colored operad associated to a $\La$ operad map}\label{sec:ESC construction}
The goal of this section is to construct a colored topological operad $\ESC_f$ from a map of topological $\La$ operads
\[
f \colon \op P \to \op Q.
\]

The definition is made so that if we take for $f$ the map $\lD_m\to \lD_n$, then $\ESC_f\simeq \ESC_{m,n}$.
The dual construction will produce a two colored dg Hopf cooperad $\dESC_F$ from a map of dg Hopf $\La$ cooperads
\[
F \colon \op C \to \op D.
\]
Applying this construction to a rational dg Hopf cooperad model for the map $\lD_m\to \lD_n$ we will obtain rational (dgca) models for $\ESC_{m,n}$.

\subsection{The trivial colored operad associated to an operad}
Given an ordinary operad $\op P$ we may "trivially" build a two colored operad $\op P_{triv}$ by declaring that
\begin{align*}
\op P_{triv}^1(k,l) &= 
\begin{cases}
* & \text{for $l=0$} \\
\emptyset & \text{for $l>0$} 
\end{cases}\\
\op P_{triv}^2(k,l) &= \op P(l) 
\end{align*}
and with the obvious compositions derived from the compositions in $\op P$.
Clearly, the construction $\op P \to \op P_{triv}$ is functorial,

\subsection{The painted colored operad associated to an operad}
Given an ordinary operad $\op P$ we may build another two colored operad $\op P_{paint}$ by declaring that
\begin{align*}
\op P_{paint}^1(k,l) &= 
\begin{cases}
\op P(k) & \text{for $l=0$} \\
\emptyset & \text{for $l>0$} 
\end{cases}\\
\op P_{paint}^2(k,l) &= \op P(k+l) 
\end{align*}
and with the obvious compositions derived from the compositions in $\op P$. In other words, the operad $\op P_{paint}$ is defined by declaring (or "painting") some inputs and the output in color 2.
Mind that we require here that if any inputs are "painted" in color 2, then so must be the output.
Again, the construction $\op P \to \op P_{paint}$ is obviously functorial,

\subsection{A map between these colored operads}
Now suppose that $\op P$ in addition carries a $\La$ structure. Then we define a map of two colored operads $\op P_{paint}\to \op P_{triv}$ such that the map 
\begin{align*}
 \op P(k+l) = \op P_{paint}^2(k,l)  \to  \op P_{triv}^2(k,l) = \op P(l) 
\end{align*}
is the $\La$ map associated to the inclusion $[l]\to [k+l]$. In other words, one formally "fills the first $k$ inputs with the unit".
In the other arity components the map $\op P_{paint}\to \op P_{triv}$ is necessarily trivial, since so are the corresponding components of $\op P_{triv}$.

It is not hard to check that the above map of two colored collections $\op P_{paint}\to \op P_{triv}$ is indeed a map of colored operads.
Furthermore, the construction is evidently functorial in $\op P$.

\subsection{The colored operad $\ESC_f$}\label{sec:ESCfdef}
Now suppose we have a map of $\La$ operads 
\[
f\colon \op P\to \op Q.
\]
Using the maps above and the functoriality we can then build a zigzag $\op P_{triv} \to \op Q_{triv} \leftarrow \op Q_{paint}$. Our main definition (and the target of this section) is then the following.
\begin{defi}
We define the two colored operad $\ESC_f$ associated to the map of $\La$ operads $
f\colon \op P\to \op Q$ to be the homotopy pullback
\begin{equation}\label{equ:ESCpullback}
\begin{tikzcd}
\ESC_f \ar{r} \ar{d} & \op Q_{paint} \ar{d}\\
\op P_{triv} \ar{r} & \op Q_{triv}
\end{tikzcd}
.
\end{equation}
\end{defi}

In the case that all $\La$ maps of $\op Q$ are fibrations, as is the case in all examples of relevance in this paper, we will quietly replace the homotopy pullback by an ordinary pullback.

\begin{rem}\label{rem:weakequiv}
Note that the construction $f\mapsto \ESC_f$ is functorial. Moreover, weak equivalence between morphisms $f:\op P\to \op Q$ and $f':\op P'\to \op Q'$, i.e., a commutative diagram of the form
\[
\begin{tikzcd}
\op P \ar{r}{f} \ar{d}{\simeq}& \op Q \ar{d}{\simeq}\\
\op P'  \ar{r}{f'} & \op Q'
\end{tikzcd},
\]	
induces a weak equivalence between two colored operads $\ESC_f\simeq \ESC_{f'}$ .
\end{rem}

\begin{rem}\label{rem:ESCfESCmn}
The components of the Extended Swiss Cheese operads fit into pullback squares 
\[
\begin{tikzcd}
\ESC_{m,n}(k,l) \ar{r} \ar{d} & \op \lD_n(k+l) \ar{d}\\
\lD_m(l) \ar{r}{f} & \lD_n(l)
\end{tikzcd}
\]
with the right-hand maps being fibrations. Hence $\ESC_{m,n}$ is the homotopy pullback in the diagram \eqref{equ:ESCpullback}.
Inspection then shows that indeed the operad structure inherited from being a pullback of operads \eqref{equ:ESCpullback} coincides with the ``naturally defined'' one through gluing of discs, and hence $\ESC_f\cong \ESC_{m,n}$ as desired.
\end{rem}

\subsection{Fulton-MacPherson-Axelrod-Singer variant}
For some application we want our operads to be reduced, i.e., being a point in arity one, and the little disks operads $\lD_n$ obviously are not.
However, we may replace them by the Fulton-MacPherson-Axelrod-Singer compactification of the configuration spaces of points $\FM_n$, see \cite{GJ}. They satisfy in particular that $\FM_n(1)=*$.
The inclusion maps $f:\lD_m\to \lD_n$ can be replaced by similar inclusion maps 
\[
g:\FM_m\to \FM_n,
\]
that are induced from the standard embedding $\R^m\subset \R^n$.
We may hence define the Fulton-MacPherson-Axelrod-Singer variant of the extended Swiss Cheese operad to be 
\[
\FM_{m,n} := \ESC_g.
\]

\subsection{The dual construction}

Next suppose we have a map of dg Hopf $\La$ cooperads
\[
F \colon \op C\to \op D.
\]
We then define the colored dg Hopf homotopy cooperad $\dESC_F$ by (the dual of) our construction above, i.e., as the homotopy pushout
\[
\begin{tikzcd}
 \op C_{triv} \ar{r} \ar{d} & \op \op C_{paint} \ar{d}\\
\op D_{triv} \ar{r} &\dESC_{F}\, .
\end{tikzcd}
\]

More concretely, the left and upper arrows gives rise to a homotopy cooperad in the category of diagrams of dg commutative algebras of shape $\bullet \leftarrow \bullet \rightarrow \bullet$.
The homotopy pushout (derived tensor product) is then a symmetric lax monoidal functor from this category to the category of dgcas, and hence gives rise to a homotopy dg Hopf cooperad.
In particular, the homotopy push-out agrees with the "arity-wise" homotopy push-out in the underlying category of dg commutative algebras.
For trees that are individual corollas we have push-out diagrams of dgcas
\[
\begin{tikzcd}
  \op C(l)  \ar{r} \ar{d} & \op  \op C(k+l) \ar{d}\\
 \op D(l) \ar{r} &\dESC_{F}(k,l).
\end{tikzcd}
\]

\begin{rem}
In general, for the homotopy pushout of the diagram of dgcas $B\leftarrow A \to C$ we may pick here the concrete "bar" realization 
\[
\bigoplus_{n\geq 0} B\otimes (A[1])^{\otimes n} \otimes C.
\]
Note however that this has two disadvantages: (i) the functor is lax monoidal and not oplax monoidal, hence in our context we obtain a homotopy cooperad not a cooperad and (ii) the complex is not concentrated in non-negative degrees, even if $A,B,C$ are. Hence we can define $\dESC_F$ (only) as a colored homotopy dg Hopf cooperad, with the underlying category being unbounded cochain complexes.

If we restrict this construction to non-negatively graded dg $\La$-Hopf cooperads such that all the $\La$-maps (in the source of $F$) are cofibrations of dgcas, then the homotopy pushout can be replaced by an ordinary pushout.
In particular, the construction $\dESC_F$ yields an honest (i.e., not ``homotopy'') non-negatively graded colored dg Hopf cooperad.
\end{rem}

\section{The colored operad $\ESC_{m,n}$}
In this section we restrict to the special case of the operad $\ESC_{m,n}$ and give a proof of our main Theorem \ref{thm:main}.

\subsection{Cohomology}\label{sec:cohomology}
First let us compute the cohomology cooperad $H(\ESC_{m,n}):= H^\bullet(\ESC_{m,n}, \Q)$.

For $n>m\geq 1$ we denote the canonical map from $\lD_m$ to $\lD_n$ by 
\[
f: \lD_m\to \lD_n.
\]
We note that the induced map on cohomology
\[
H(f)\colon H( \lD_n)\to H(\lD_m)
\]
factorises through the cocommutative cooperad $\Com^*$, i.e., it sends all generators $\omega_{ij}$ (see section \ref{sec:little disks}) to zero.

Furthermore, note that since $H(\e_n(r+s))$ is free as a $H(\e_n(r))$-module by Lemma \ref{lem:freeness}, we can replace the homotopy pushouts in the construction of $\dESC_{H(f)}$ by ordinary pushouts and understand $\dESC_{H(f)}$ as a non-negatively graded colored Hopf cooperad (with zero differential).

Note also that from this we see that there is a natural map
\beq{equ:Hf}
\dESC_{H(f)} \to H(\ESC_{m,n}).
\eeq
This in turn implies that the Eilenberg-Moore spectral sequence associated to the pullback diagrams collapses at the $E^2$ page.
Provided the base spaces in all pullback diagrams are simply connected, i.e., provided that $n\geq 3$, we immediately conclude from the standard convergence result for the Eilenberg-Moore spectral sequence that \eqref{equ:Hf} is an isomorphism.
In the remaining case of $n=2$ and $m=1$ we do not know a one-line proof.
However, one can easily repeat the arguments leading to the computation of $H(\lD_n)$, see for example \cite{Sinha}.
Essentially equivalently one can invoke the refined convergence result for the Eilenberg-Moore spectral sequence of Dwyer \cite{Dwyer74}, applied to the fibrations 
\[
\lD_n(r+s)\to \lD_n(r+s-1)\to \cdots  \to \lD_n(r).
\]

We leave the detailed argument in this special case ($n=m+1=2$) to the reader and merely state the result.
\begin{prop}\label{prop:cohomology}
The natural map of colored dg Hopf cooperads
\[
\dESC_{H(f)} \to H(\ESC_{m,n})
\]
is an isomorphism for all $n>m\geq 1$.
\end{prop}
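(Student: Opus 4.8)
<br>

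The proof of Proposition~\ref{prop:cohomology} splits naturally according to the connectivity of the base spaces of the relevant fibrations. The plan is as follows.

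\medskip

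\textbf{Construction of the map.} First I would construct the natural map \eqref{equ:Hf}. By Remark~\ref{rem:ESCfESCmn} the space $\ESC_{m,n}(k,l)$ sits in a pullback square with the fibration $\lD_n(k+l)\to\lD_n(l)$, and applying singular cohomology with $\Q$ coefficients to this square yields, via the contravariant functoriality, a commutative square of graded commutative algebras
\[
\begin{tikzcd}
H(\lD_n(l)) \ar{r} \ar{d} & H(\lD_n(k+l)) \ar{d}\\
H(\lD_m(l)) \ar{r} & H(\ESC_{m,n}(k,l)).
\end{tikzcd}
\]
By the universal property of the pushout (which computes $\dESC_{H(f)}(k,l)$ as an honest pushout, since the $\La$-map $H(\e_n(l))\to H(\e_n(k+l))$ is free by Lemma~\ref{lem:freeness}, hence a cofibration of dgcas), there is a canonical algebra map $\dESC_{H(f)}(k,l)\to H(\ESC_{m,n}(k,l))$, and one checks on corollas that these assemble into a morphism of colored Hopf cooperads. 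Here one uses that $H(f)$ kills all the generators $\omega_{ij}$, factoring through $\Com^*$, so the pushout one is taking is along the augmentation $H(\e_n(l))\to\Q$ tensored up, which is precisely the construction of $\dESC_{H(f)}$.

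\medskip

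\textbf{Simply connected case $n\geq 3$.} Here I would invoke the Eilenberg--Moore spectral sequence of the pullback square of Remark~\ref{rem:ESCfESCmn}. Since $\lD_n(l)$ and $\lD_n(k+l)$ are simply connected for $n\geq 3$, the Eilenberg--Moore spectral sequence converges to $H(\ESC_{m,n}(k,l))$, with $E^2$-page $\mathrm{Tor}_{H(\lD_n(l))}\big(H(\lD_m(l)),\,H(\lD_n(k+l))\big)$. By Lemma~\ref{lem:freeness} the module $H(\e_n(k+l))$ is free over $H(\e_n(l))$, so all higher $\mathrm{Tor}$ groups vanish and the $E^2$-page reduces to $H(\lD_m(l))\otimes_{H(\lD_n(l))}H(\lD_n(k+l))$, which is exactly $\dESC_{H(f)}(k,l)$. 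The spectral sequence therefore collapses at $E^2$, the map \eqref{equ:Hf} is identified with the edge homomorphism, and one concludes it is an isomorphism. I would then remark that the isomorphism is compatible with the colored cooperad structure because the Eilenberg--Moore spectral sequence is natural with respect to maps of pullback squares, and the $\La$- and cooperadic structure maps of $\ESC_{m,n}$ are realised by such maps.

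\medskip

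\textbf{The exceptional case $n=2$, $m=1$.} This is the one spot where the one-line argument fails, because $\lD_2(l)$ is not simply connected. The plan is to reduce to iterated fibrations $\lD_2(r+s)\to\lD_2(r+s-1)\to\cdots\to\lD_2(r)$ whose fibres are $\R^2$ minus finitely many points, hence $K(\pi,1)$'s with free $\pi_1$, and to invoke Dwyer's refined convergence criterion \cite{Dwyer74}: the Eilenberg--Moore spectral sequence still converges provided the $\pi_1$-action on the cohomology of the fibre is nilpotent, which holds here since the cohomology of the punctured plane is concentrated in degrees $0$ and $1$ and the action on $H^1$ is by a unipotent (indeed, for the relevant pure braid subgroups, trivial) representation. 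Alternatively — and this is the route I would actually write out, or at least indicate — one repeats verbatim the inductive computation of $H(\lD_2)$ from the Fadell--Neuwirth fibrations as in \cite{Sinha}, carrying the extra ``painted'' fibre factor along; the freeness of Lemma~\ref{lem:freeness} guarantees the relevant Serre/Leray--Hirsch-type splitting at each stage. The main obstacle throughout the proof is precisely this low-dimensional case: one must verify the nilpotence/convergence hypothesis and check that the resulting isomorphism is still cooperadic, which the excerpt (reasonably) leaves to the reader. For $n\geq 3$ the argument is essentially formal once Lemma~\ref{lem:freeness} is in hand.
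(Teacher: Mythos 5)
Your proposal is correct and follows essentially the same route as the paper: use Lemma~\ref{lem:freeness} to turn the homotopy pushouts into honest pushouts and to kill the higher $\mathrm{Tor}$ terms, deduce collapse of the Eilenberg--Moore spectral sequence at $E^2$ for $n\geq 3$, and handle the case $n=2$, $m=1$ either by Dwyer's refined convergence criterion or by re-running the Fadell--Neuwirth fibration computation of $H(\lD_2)$ with the painted fibre factor carried along --- both of which are precisely the alternatives the paper proposes and then leaves to the reader. Your elaboration of the $\mathrm{Tor}$-vanishing and the nilpotence of the monodromy is a reasonable expansion of the same argument, not a different one.
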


\subsection{Proof of the first part of Theorem \ref{thm:main}}\label{sec:firstpartofproof}
In this section we want to show the rational formality of $\ESC_{m,n}$ for $n-m\geq 2$ and $m\geq 1$. To show this we want to show that a rational model (i.e., a colored homotopy dg Hopf cooperad) for $\ESC_{m,n}$ is formal.
Let us first build such a model. 
We again denote the canonical map from $\lD_m$ to $\lD_n$ by 
\[
f: \lD_m\to \lD_n.
\]
Pick some rational models (dg Hopf $\La$-cooperads) $\lDM_m$ and $\lDM_n$ for $\lD_m$ and $\lD_n$, together with a rational model for the above map
\[
F : \lDM_n\to \lDM_m.
\] 

Now we would like to conclude that $\dESC_{F}$ is indeed a rational model for $\ESC_f$, i.e., quasi-isomorphic to $\Omega_{PL}(\ESC_f)$. For $m\geq 2$ this can be deduced from the statement that the model of the pullback is the pushout of the models, see Proposition 15.8 of \cite{FHT} or Theorem 2.4 of \cite{HessRHT}.
However, for our purposes the conditions in the aforementioned results from the literature are too restrictive to handle the cases $m=1,2$. Fortunately, however, we may simply check the desired statement by hand since all cohomologies can be computed.
To this end it suffices to take $\lDM_m=\Omega_{PL}(\lD_m)$ and $\lDM_n=\Omega_{PL}(\lD_n)$ with $F=f^*$.
Then merely by functoriality and properties of the (co)limit we have a map
\[
\dESC_{F} \to \Omega_{PL}(\ESC_{m,n}).
\]
We want to show that this map is a quasi-isomorphism.
To check this we have to check that the natural maps
\beq{equ:tobesame}
\Omega_{PL}(\lD_n(k+l)) \hat \otimes_{\Omega_{PL}(\lD_n(l))} \Omega_{PL}(\lD_m(l)) 
\to
\Omega_{PL}(\ESC_{m,n}(k,l))
\eeq
are quasi-isomorphisms. But this is easily checked by hand: The cohomology of the right-hand side has been computed in section \ref{sec:cohomology}.
On the other hand, the left-hand side is a second quadrant double complex and there is a convergent spectral sequence such that
\[
E^2 = H(\lD_n(k+l)) \hat \otimes_{H(\lD_n(l))} H(\lD_m(l)).
\] 
But this expression is equal to $H(\ESC_{m,n}(k,l))$ by Proposition \ref{prop:cohomology}, and the cohomology is represented by cocycles in the original complex, so the spectral sequence abuts here, and the left-hand and right-hand side of \eqref{equ:tobesame} have isomorphic cohomology.
It remains to check that the map \eqref{equ:tobesame} induces an isomorphism, but this is clear looking at the explicit representatives.

Hence we find that the colored homotopy dg Hopf cooperad $\dESC_{F}$ is indeed a rational model for $\ESC_{m,n}$.

Now, by the relative intrinsic formality theorem of \cite{FW} and the assumption that $n-m\geq 2$ the map $F$ is formal, i.e., it can be connected to the cohomology map 
\[
H(F) : \e_n^* \stackrel{*}{\to} \e_m^* 
\]
by a zigzag of quasi-isomorphisms of dg $\La$-Hopf cooperads. By Remark \ref{rem:weakequiv} it then follows that the (much simpler) colored dg Hopf cooperad $\dESC_{H(F)}$ is connected by a zigzag of quasi-isomorphisms to $\dESC_{F}$, and is hence also a rational model for $\ESC_{m,n}$.
Furthermore, as in section \ref{sec:cohomology} $\dESC_{H(F)}$ can be taken to have zero differential and hence is trivially formal and  $\dESC_{H(F)}=H^*(\dESC_{m,n})$. This shows the first claim of Theorem \ref{thm:main}.

\begin{rem}
We note that here we can construct the above zigzag of quasi-isomorphisms connecting $\dESC_F$ and $\dESC_{H(F)}$ either in the category of two colored homotopy dg Hopf cooperads, or in the category of honest two colored dg Hopf cooperads concentrated in non-negative degrees.
For the latter statement we just need to replace the functor $\Omega_{PL}$ by the (two colored version of the) operadic upgrade $\Omega_\#$ as in section \ref{sec:rhtFresse}.
Furthermore we have to replace (resolve) all dg $\La$ Hopf cooperads in the zigzag connecting $F$ and $H(F)$ by cofibrant objects. Then we can use Lemma \ref{lem:cofibrant} to see that all homotopy pushouts in the constructions $\dESC_{(\cdots)}$ can be replaced by ordinary pushouts, so that all colored cooperads occurring in our zigzag are honest colored dg Hopf cooperads, concentrated in non-negative cohomological degrees.
\end{rem}

\section{Recovering the operad map}

Let $\op S$ be any two colored operad in dg vector spaces. From $\op S$ may extract two one colored operads
\begin{align*}
\op P &:=\op S^2(0,-)
&
\op Q &:=  \op S^1(-, 0) 
\end{align*}
and an operadic $\op P$-$\op Q$-bimodule 
\[
\op M := \op S^2(-,0).
\]
We call the operad $\op S$ to be of "Swiss Cheese type" if the $\op P$-$\op Q$-bimodule $\op M$ is a right torsor, cf. Definition \ref{def:torsor} in section \ref{app:torsors}. For example, the operads of chains of the ordinary and the extended Swiss Cheese operads are of Swiss Cheese type. Furthermore, the operads of chains of operads arising from the construction $\ESC_f$ of section \ref{sec:ESC construction} above are of Swiss Cheese type, if we require that the space of unary operations in the target of $f$ is contractible.

As shown in the Appendix, any operadic $\op P$-$\op Q$-torsor encodes an operad map up to homotopy 
\[
\op P \dashrightarrow \op Q. 
\]
Hence any two colored operad of Swiss Cheese type encodes such an operad map. Furthermore, a quasi-isomorphism $\op S \to \op S'$ between such two colored operads induces a quasi-isomorphism of the associated right torsors
\[
 \begin{tikzcd}[column sep=0.5em]				
 \op P \arrow{d}{\sim} &\aol & \op M \arrow{d}{\sim} & \aor & \op Q \arrow{d}{\sim} \\
 \op P' & \aol  & \op M' &\aor & \op Q'
 \end{tikzcd}\, .
\]
By Proposition \ref{prop:mapfunctorial} we hence see that quasi-isomorphic two colored operads of Swiss cheese type encode quasi-isomorphic operad maps between their components of colors 1 and 2.
In particular we find the following result.
\begin{prop}\label{prop:mapnonformal}
Two two-colored operads $\op S$ and $\op S'$ of Swiss Cheese type can be quasi-isomorphic only if the induced operad maps $\op S^2(0,-) \dashrightarrow \op S^1(-,0)$ and $ (\op S')^2(0,-) \dashrightarrow (\op S')^1(-,0)$ are quasi-isomorphic.
\end{prop}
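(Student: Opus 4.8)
The plan is to deduce this from the homotopy-functoriality of the ``torsor $\mapsto$ operad map'' construction set up in the Appendix, together with the elementary observation that a quasi-isomorphism of two colored operads restricts to a quasi-isomorphism of the associated torsor data. First I would unwind what a quasi-isomorphism $\phi\colon \op S\to \op S'$ of two colored operads in dg vector spaces provides: by definition such a morphism is a quasi-isomorphism in each arity and for each combination of input and output colors, so it restricts to quasi-isomorphisms of the one colored operads $\op P=\op S^1(-,0)\xrightarrow{\sim}(\op S')^1(-,0)=\op P'$ and $\op Q=\op S^2(0,-)\xrightarrow{\sim}(\op S')^2(0,-)=\op Q'$, and to a quasi-isomorphism of the operadic bimodules $\op M=\op S^2(-,0)\xrightarrow{\sim}(\op S')^2(-,0)=\op M'$ which is compatible with these two operad morphisms. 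This is precisely the data of a quasi-isomorphism of right torsors, i.e.\ a commutative ladder
\[
\begin{tikzcd}[column sep=0.5em]
\op P \arrow{d}{\sim} &\aol & \op M \arrow{d}{\sim} & \aor & \op Q \arrow{d}{\sim} \\
\op P' & \aol & \op M' &\aor & \op Q'
\end{tikzcd}.
\]

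Next I would invoke Proposition \ref{prop:mapfunctorial}: it asserts that the assignment taking an operadic $\op P$-$\op Q$-torsor to the homotopy class of operad maps $\op P\dashrightarrow\op Q$ it encodes is functorial, and in particular carries a quasi-isomorphism of torsors to a quasi-isomorphism of the encoded operad maps. Feeding in the ladder from the previous step immediately yields that $\op S^1(-,0)\dashrightarrow\op S^2(0,-)$ and $(\op S')^1(-,0)\dashrightarrow(\op S')^2(0,-)$ are quasi-isomorphic. To handle ``quasi-isomorphic'' in the sense of being connected by a zigzag of quasi-isomorphisms of two colored operads, I would simply apply this to each arrow of such a zigzag, in whichever direction it points, and then compose the resulting zigzag of quasi-isomorphisms of operad maps, which gives the claim.

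The real content of all this lives in the Appendix, namely in establishing that a right torsor does encode a well-defined homotopy class of operad maps and that this assignment is homotopy-functorial; that is Proposition \ref{prop:mapfunctorial}, built on Definition \ref{def:torsor} and the bar-type constructions there. Granting that, the present statement carries essentially no obstacle: the only things left to verify are the purely formal restriction statement of the first step, that a componentwise quasi-isomorphism of colored operads restricts to quasi-isomorphisms of $\op P$, $\op Q$ and $\op M$ forming a morphism of torsors, and the routine bookkeeping for zigzags. If there is any point to watch, it is merely to confirm that the notion of ``quasi-isomorphic operad maps'' appearing in the statement is literally the one introduced in the Appendix, so that Proposition \ref{prop:mapfunctorial} applies verbatim.
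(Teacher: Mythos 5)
Your argument is precisely the one the paper gives: a quasi-isomorphism of two colored operads of Swiss Cheese type restricts in colors to a quasi-isomorphism of the associated right torsors, and then Proposition~\ref{prop:mapfunctorial} delivers the conclusion; the zigzag bookkeeping you add is a routine and correct elaboration. No issues.
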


\subsection{The proof of Theorem \ref{thm:main}}
Having shown the first part of Theorem \ref{thm:main} in section \ref{sec:firstpartofproof}, it suffices to show the second part, namely that the operad of chains on the extended Swiss Cheese operad $\FM_{m,n}$ is not formal in codimension $n-m=1$.
Indeed, if it were, then by Proposition \ref{prop:mapnonformal} the induced map of operads 
\[
C(\FM_{n-1}) \to C(\FM_n)
\]
was formal. However, it has been shown in \cite{TW} that the above operad map is \emph{not} formal, and hence the non-formality of $C(\FM_{n-1,n})$ and Theorem \ref{thm:main} follow.
\hfill\qed

\subsection{A new proof of non-formality of the standard Swiss Cheese operad} 
Note that the above arguments apply mutatis mutandis also to the operads of chains of the standard Swiss Cheese operads $\SC_n$ \cite{V}, which are also of Swiss Cheese type. 
This gives a second proof of the following result, shown by Livernet.
\begin{cor}[\cite{livernet}] 
The chains operads $C(\SC_n)$ of the Swiss Cheese operads are not formal.
\end{cor}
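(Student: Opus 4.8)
The plan is to run the proof of the second part of Theorem~\ref{thm:main} essentially verbatim, replacing the extended Swiss Cheese operad $\FM_{m,n}$ in codimension one by the standard Swiss Cheese operad $\SC_n$. As observed above, $C(\SC_n)$ is of Swiss Cheese type, so it encodes an operad map between its constituent one-colored operads $\op S^1(-,0)$ and $\op S^2(0,-)$ (see Appendix~\ref{app:torsors}); the strategy is to identify this encoded map with the little disks inclusion $C(\FM_{n-1})\to C(\FM_n)$, whose non-formality is established in \cite{TW}.

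Concretely, I would first spell out the Swiss-Cheese-type data of $\op S:=C(\SC_n)$. The closed-closed operad is $\op P:=\op S^1(-,0)=C(\lD_n)$; the open-open operad $\op Q:=\op S^2(0,-)$ consists of chains on configurations of little half-$n$-disks with flat faces on the $(n-1)$-dimensional boundary hyperplane of the ambient half-disk, so that $\op Q\simeq C(\lD_{n-1})$; and the $\op P$-$\op Q$-bimodule $\op M:=\op S^2(-,0)$ consists of chains on configurations of full little $n$-disks inside the ambient half-disk, so that $\op M\simeq C(\lD_n)$ since a half-$n$-disk is homeomorphic to an $n$-disk. Unwinding the definitions --- and, as elsewhere in the paper, passing to the Fulton-MacPherson-Axelrod-Singer compactification of $\SC_n$ so that all spaces of unary operations become contractible --- one checks that $\op M$ is a right torsor in the sense of Definition~\ref{def:torsor}, and that the operad map it encodes is exactly the standard boundary inclusion, i.e.\ the chain-level map $C(\FM_{n-1})\to C(\FM_n)$.

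Suppose now, for contradiction, that $C(\SC_n)$ were formal, i.e.\ connected by a zigzag of quasi-isomorphisms of two-colored operads to its cohomology operad $H(\SC_n)$. A K\"unneth-type computation parallel to Proposition~\ref{prop:cohomology}, relying on the freeness of $\e_n^*(r+k)$ over $\e_n^*(r)$ from Lemma~\ref{lem:freeness}, shows that $H(\SC_n)$ is again of Swiss Cheese type and that the operad map it encodes is the induced map on (co)homology $H(\FM_{n-1})\to H(\FM_n)$, which has zero differential and is therefore (trivially) formal. Applying Proposition~\ref{prop:mapnonformal} to the quasi-isomorphic Swiss-Cheese-type operads $C(\SC_n)$ and $H(\SC_n)$ then forces the map $C(\FM_{n-1})\to C(\FM_n)$ to be quasi-isomorphic to $H(\FM_{n-1})\to H(\FM_n)$, hence formal. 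This contradicts \cite{TW}, and therefore $C(\SC_n)$ is not formal.

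The one genuinely delicate point is the verification in the second paragraph: checking that both $C(\SC_n)$ and $H(\SC_n)$ satisfy the torsor condition and, crucially, that the operad map encoded by $C(\SC_n)$ is the standard inclusion $\lD_{n-1}\hookrightarrow\lD_n$ rather than some twisted variant, together with the customary reducedness bookkeeping. Everything past that identification is a formal consequence of Proposition~\ref{prop:mapnonformal} and \cite{TW}, exactly as in the proof of Theorem~\ref{thm:main}.
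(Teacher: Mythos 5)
Your proof follows the same strategy as the paper's, which disposes of the corollary in one sentence (``the above arguments apply mutatis mutandis to $C(\SC_n)$''): identify $C(\SC_n)$ as a Swiss-Cheese-type operad, recognize the operad map it encodes as the chain-level boundary inclusion $C(\FM_{n-1})\to C(\FM_n)$, and invoke Proposition~\ref{prop:mapnonformal} together with the non-formality of that map from \cite{TW}. You merely spell out the pieces $(\op P,\op Q,\op M)$ and the identification of the encoded map in more detail than the paper does, which is a reasonable thing to do given how terse the original is.

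One small remark: verifying that $H(\SC_n)$ is of Swiss Cheese type does not actually require a K\"unneth/Eilenberg--Moore computation parallel to Proposition~\ref{prop:cohomology}; the torsor condition of Definition~\ref{def:torsor} asks only that the composition-with-$\bo$ map from the operad playing the role of $\op Q$ to $\op M$ be a quasi-isomorphism, and on cohomology this map is the identity of $H(\lD_n)$, so the condition is immediate. (The freeness Lemma~\ref{lem:freeness} is used in the paper for computing $H(\ESC_{m,n})$, not for checking the torsor condition.) This does not affect the correctness of your argument.
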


\begin{rem}
The above Corollary is relatively easy to show directly by standard obstruction theory, because the obstruction to the existence of a map from a cofibrant resolution of $H(\SC_n)$ to $C(\SC_n)$ appears already in arity 2, where the calculations are very manageable. 
On the other hand, the obstruction for the operad map $\lD_{n-1}\to\lD_{n}$ to be formal appears only in arity 4 \cite{TW}. 
In this arity there are already many operations in the resolution of $H(\ESC_{n-1,n})$, making the calculations quite intricate. That is why we refrain from using a direct obstruction theoretic computation to show the non-formality of $\ESC_{n-1,n}$ (-which should be possible-), but rather stick to the more conceptual methods of this section.
\end{rem}



%

\newcommand{\iHom}{\mathcal{H}\mathit{om}}
\newcommand{\OHom}{\mathcal{OH}\mathit{om}}
\newcommand{\OEnd}{\mathcal{OE}\mathit{nd}}

\newcommand{\Mod}{\mathsf{Mod}}
\newcommand{\Ind}{\mathrm{Ind}}
\renewcommand{\Bar}{B}
\newcommand{\Barc}{\Omega}

\section{Operadic bimodules and maps between operads}\label{app:torsors}

In this section we show that certain operadic bimodules can be used to encode operad maps.
This is used above to show the non-formality part of Theorem \ref{thm:main}.
We shall note that the idea of encoding operad maps by torsors has already been introduced in \cite{CW}. Here we present an extended version of those ideas and constructions.

\subsection{Recollections on operadic right modules}
Let $\op Q$ be a dg operad.
An operad is a monoid in the category of symmetric sequences with the "plethysm" monoidal product $\circ$, see \cite{lodayval}.
An operadic right $\op Q$-module is a right module for the monoid $\op Q$. Concretely, it is a
symmetric sequence $\op M$ together with a map 
\[
\op M\circ \op Q \to \op M
\]
satisfying some associativity and unit axioms. 
We shall need several facts about the category $\Mod_{\op Q}$ of operadic dg right $\op Q$-modules, which we mostly recall from \cite{Frbookmod}. First, $\Mod_{\op Q}$ comes equipped with a model structure obtained by transfer along the forgetful functor to dg symmetric sequences.
Concretely, the weak equivalences are the quasi-isomorphisms, the fibrations are the arity-wise surjective maps and the cofibrations are defined via the lifting property with respect to acyclic fibrations. In particular, all objects are fibrant.
Furthermore, since right $\op Q$-modules are "linear objects" $\Mod_{\op Q}$ is naturally a dg category. We denote by $ \iHom_{\op Q}(\op M,\op N)$ the dg vector space of morphisms between the right $\op Q$-modules $\op M$ and $\op N$.
(Actual module morphisms are then degree zero cocyles in the dg vector space.)
Finally, the following result is standard.
\begin{lemma}[Existence of homotopy inverses]\label{lem:htpy inverse}
Let $f: \op M  \to \op N$ be a weak equivalence between cofibrant objects in $\Mod_{\op Q}$.
Then there exists a weak equivalence $g: \op N  \to \op M$ and homotopies (right $\op Q$-module morphisms) $h_1:\op M\to \op M[-1]$ and $h_2:\op N\to \op N[-1]$ such that  
\begin{align*}
\id_{\op M} -gf&=dh_1+h_1d & 
 \id_{\op N} -fg&=dh_2+h_2d
\end{align*}
If $f$ is a fibration (resp. a cofibration) then we can take $h_2=0$ (resp. $h_1$=0). 
\end{lemma}
\begin{proof}
	First we obtain $g$ and $h_2$ satisfying the second equation by lifting the following diagram
	\[
	\begin{tikzcd}
	* \ar{r} \ar{d} & \op M \oplus (\op N \oplus \op N[-1], d) \ar{d}{(f,\id_{\op N},0)}\\
	\op N \ar{r}{=} 
	& \op N
	\end{tikzcd}\, .
	\]
	(Note that the right-hand map is surjective and hence a fibration.)
	Then we obtain similarly a map $\tilde f:\op M\to \op N$ and a homotopy $\tilde h:\op M\to \op M[-1]$ such that $\id-g\tilde f=d\tilde h + \tilde h d$.
	We then obtain the desired homotopy as
	\[
	h_1 := (\id-g f)\tilde h + gh\tilde f.
	\]
	Indeed, we check 
	\[
	dh_1+h_1d = (\id-g f)(\id-g \tilde f) + g(\id-fg)\tilde f = \id -gf.
	\]
	If $f$ is a fibration we can just omit the two $\op N$ in the upper right corner of the above square in the first step and set $h_2=0$.
	If $f$ is a cofibration we define $g$ by lifting the diagram
	\[
		\begin{tikzcd}
	\op M \ar{r}{=} \ar{d}{f} & \op M \ar{d}\\
	\op N \ar{r} 
	& *
	\end{tikzcd}
	\]
	instead and set $h_1=0$. 
\end{proof}

\subsection{Endomorphism operad of right $\op Q$-modules}
Next, there is a symmetric monoidal structure on $\Mod_{\op Q}$ such that for right modules 
$\op M, \op N\in \Mod_{\op Q}$ the tensor product is defined via (cf. \cite[section 2.1.7]{Frbookmod})
\[
(\op M \otimes \op N)(r) = \bigoplus_{r_1+r_2=r} \mathrm{Ind}_{S_{r_1}\times S_{r_2}}^{S_r} \op M(r_1)\otimes \op M(r_2).
\]

For right $\op Q$ modules $\op M$ and $\op N$ we define the symmetric sequence $\OHom_{\op Q}(\op M, \op N)$ such that 
\[
\OHom_{\op Q}(\op M, \op N)(r) := \iHom_{\op Q}(\op M^{\otimes r},\op N).
\]
In the special case $\op M=\op N$ this is naturally a dg operad, the endomorphism operad of $\op M$,
\[
\OEnd_{\op Q}(\op M) = \OHom_{\op Q}(\op M,\op M) =  \iHom_{\op Q}(\op M^{\otimes -},\op N).
\]
The endomorphism operad construction is not a functor in $\op M$. However, it still has some good functorial properties as the following result shows.
\begin{prop}\label{prop:functorial}
	To a weak equivalence $f:\op M\to \op N$ between cofibrant right $\op Q$-modules one can assign an operadic quasi-isomorphism up to homotopy (a zigzag) $F : \OEnd_{\op Q}(\op M) \dashrightarrow \OEnd_{\op Q}(\op N)$ in such a way that the following properties hold:
	\begin{enumerate}
		\item If $f$ is the identity map so is $F$. 
		\item The assignment $f\mapsto F$ is compatible with compositions in the following sense. If $g:\op N\to \op N'$ is another weak equivalence between cofibrant objects that is assigned a zigzag $G : \OEnd_{\op Q}(\op N) \dashrightarrow \OEnd_{\op Q}(\op N')$, then the composite $g\circ f$ is assigned a zigzag homotopic to the composition of $G$ and $F$.
		\item $F$ can be realized as a zigzag $\OEnd_{\op Q}(\op M)\xleftarrow{\sim} X \xrightarrow{\sim} \OEnd_{\op Q}(\op M)$ such that the diagram of symmetric sequences 
		\begin{equation}\label{equ:functorial pullback}
		\begin{tikzcd}
		X \ar{r} \ar{d} & \OEnd_{\op Q}(\op M) \ar{d}{ f\circ - } \\
		\OEnd_{\op Q}(\op M) \ar{r}{-\circ f} & \OHom_{\op Q}(\op M,\op N)
		\end{tikzcd}
		\end{equation}
		commutes.
	\end{enumerate}
	In fact, we shall see that these conditions determine $F$ uniquely up to homotopy.
\end{prop}

For the proof we shall need the following auxiliary result (cf. \cite[section II.2]{Frbook}).
\begin{lemma}
Let $f:\op M\to \op N$ be a weak equivalence between cofibrant right $\op Q$-modules, and let $\op N'$ be another cofibrant right $\op Q$-module. Then the maps of symmetric sequences
\begin{align*}
\OHom_{\op Q}(\op N, \op N') &\xrightarrow{-\circ f} \OHom_{\op Q}(\op M, \op N') \\
&\text{and} \\
\OHom_{\op Q}(\op N', \op M) &\xrightarrow{f\circ -} \OHom_{\op Q}(\op N' ,\op N)
\end{align*}
given by post- or precomposition with $f$ (or $f^{\otimes -}$) are weak equivalences.
If $f$ is a cofibration then the first map is a fibration (i.e., surjective) and if $f$ is a fibration then the second map is a fibration.
\end{lemma}
\begin{proof}[Proof sketch]
	We can use Lemma \ref{lem:htpy inverse} to obtain homotopy data	
	\[
	\begin{tikzcd}
	\ar[loop left]{}{h_1} \op M \ar[shift left]{r}{f} & \op N  \ar[shift left]{l}{g} \ar[loop right]{}{h_2}
	\end{tikzcd}\, .
	\]
	This then induces similar homotopy data for the above maps of hom spaces in each arity separately, showing that they are quasi-isomorphisms.
	In the case of $f$ being a (co)fibration we can take one of $h_2$ ($h_1$) to be zero, and hence obtain induced homotopy data with zero homotopy on one side between the hom spaces. This then shows that our map has a one sided inverse and in particular needs to be surjective in the cases stated.
\end{proof}

For $f:\op M\to \op N$ as above let us introduce the notation 
\[
\OEnd_{\op Q}(f) := \OEnd_{\op Q} (\op M)\times_{\OHom_{\op Q}(\op M, \op N)} \OEnd_{\op Q}(\op N).
\]
for the pullback of the diagram in Proposition \ref{prop:functorial}. Concretely, elements of $\OEnd_{\op Q}(f)(r)$ are pairs $(u,v)$ of elements $u\in  \OEnd_{\op Q} (\op M)(r)$ and $v\in  \OEnd_{\op Q} (\op N)(r)$ such that $f\circ u = v\circ f^{\otimes r}$.
We note that $\OEnd_{\op Q}$ naturally inherits the operad structure from $\OEnd_{\op Q} (\op M)\times \OEnd_{\op Q}(\op N)$.

\begin{proof}[Proof of Proposition \ref{prop:functorial}]
For a weak equivalence $f$ as in the Proposition that is either a cofibration or a fibration we define the zigzag of dg operads $F$ to be
\[
F : \OEnd_{\op Q} (\op M) \leftarrow \OEnd_{\op Q}(f) \rightarrow \OEnd_{\op Q} (\op N).
\]
We shall check that the morphisms above are weak equivalences.
Note that the diagram \eqref{equ:functorial pullback} becomes a pullback square.
But since one of the morphisms is a fibration in this case by the preceding Lemma, all morphisms in the diagram are weak equivalences by right properness of the category of dg symmetric sequences.

We shall also note that the assignment of $F$ is uniquely determined, up to homotopy, by the condition (iii) of the Proposition. Indeed, let 
\[
\OEnd_{\op Q} (\op M) \xleftarrow{p_1} X \xrightarrow{p_2} \OEnd_{\op Q} (\op N)
\]
be some other zigzag of operad quasi-isomorphisms making \eqref{equ:functorial pullback} commute. Then, from the universal property of the pullback we obtain the map of (a priori) symmetric sequences (the dashed arrow) fitting into a diagram
\[
\begin{tikzcd}
\OEnd_{\op Q} (\op M)  & \OEnd_{\op Q}(f) \ar{d} \ar{l} \\
X \ar[dashed]{ur}{(p_1,p_2)} \ar{r} \ar{u} & \OEnd_{\op Q} (\op N)
\end{tikzcd}\, .
\]
But since $p_1$ and $p_2$ are operad maps this is an operad map as well. It is also a quasi-isomorphism since all other morphisms are. Hence our two operad maps up to homotopy are homotopic, since they are the rims of a commutative diagram of weak equivalences.

Next we consider for $f$ a general weak equivalence, not necessarily a (co)fibration.
In this case we factorize $f= p\circ i$ into an acyclic cofibration $i$ followed by an acyclic fibration $p$, and just chain the zigzags obtained:
\[
\begin{tikzcd}
& & X \ar{dr}\ar{dl} & & \\
& \OEnd_{\op Q} (i) \ar{dr}\ar{dl}& & \OEnd_{\op Q} (p) \ar{dr}\ar{dl}& \\
\OEnd_{\op Q} (\op M) & & \OEnd_{\op Q} (\bullet) & &  \OEnd_{\op Q} (\op N).
\end{tikzcd}
\]
The element $X$ here may be taken such that the central square is a homotopy pullback of dg operads. One easily verifies that assertion (3) of the Proposition is then satisfied.

However, we need to verify that our zigzag is independent of the factorization $f= p\circ i$ chosen. To do this, we note that any two such factorizations are the outer rims of a diagram of acyclic (co)fibrations of the form 
\[
\begin{tikzcd}
 & \bullet \ar[twoheadrightarrow]{dr}  \ar[hookrightarrow]{d}& \\
\OEnd_{\op Q} (\op M) \ar[hookrightarrow]{r}\ar[hookrightarrow]{dr}\ar[hookrightarrow]{ur}& \bullet   \ar[twoheadrightarrow]{r} \ar[twoheadrightarrow]{d} & \OEnd_{\op Q} (\op N) \\
 & \bullet  \ar[twoheadrightarrow]{ur}& 
\end{tikzcd}\, .
\]
by standard lifting arguments. To check that the zigzags assigned to outer rim homotopy commutes it suffices to check that those assigned to the four triangles homotopy commute. But in each such, the composite is a fibration or cofibration, and hence the uniqueness statement shown just above implies that they homotopy commute.

Finally, we need to check compatibility with compositions of morphisms, i.e., assertion (2) of the proposition.
Since we may factor all morphisms into fibrations and cofibrations it suffices to check the following. Let $f=p\circ i$ be our given morphism. Then we need to check that the zigzag assigned to $f\circ p'$ or $f\circ i'$ or $p'\circ f$ or $i' \circ f$ is homotopic to the compositions of zigzags, where $p'$ ($i'$) are some other (co)fibrations.
The cases "$p'\circ f$" and "$f\circ i'$" are already covered by our uniqueness statement above since $i\circ i'$ is again an acyclic cofibration and $p'\circ p$ is an acyclic fibration.
Next consider $f\circ p'$ (resp. $\iota'\circ p$), which we factorize into an acyclic fibration and an acyclic cofibration again, say $\tilde p \circ \tilde i$. By lifting, our morphisms then can be fitted into diagrams of weak equivalences between cofibrant objects
\[
\begin{tikzcd}
\bullet \ar[hookrightarrow]{r}{i} & \bullet \ar[hookrightarrow]{dr}{p}&   \\
\OEnd_{\op Q} (\op M) \ar[hookrightarrow]{r}\ar[hookrightarrow]{dr}[swap]{\tilde i}\ar[twoheadrightarrow]{u}{p'} & \bullet \ar[twoheadrightarrow]{r} \ar[twoheadrightarrow]{u} & \OEnd_{\op Q} (\op N) \\
& \bullet  \ar[twoheadrightarrow]{ur}[swap]{\tilde p}  \ar[hookrightarrow]{u}& 
\end{tikzcd}
\]
and
\[
\begin{tikzcd}
& \bullet \ar[twoheadrightarrow]{r}{p}  \ar[hookrightarrow]{d}& \bullet \ar[hookrightarrow]{d}{i'} \\
\OEnd_{\op Q} (\op M) \ar[hookrightarrow]{r}\ar[hookrightarrow]{dr}[swap]{\tilde i}\ar[hookrightarrow]{ur}{i} & \bullet \ar[twoheadrightarrow]{r} \ar[twoheadrightarrow]{d} & \OEnd_{\op Q} (\op N) \\
& \bullet  \ar[twoheadrightarrow]{ur}[swap]{\tilde p}& 
\end{tikzcd}\, .
\]
The homotopy commutativity of the diagrams of operads associated to the triangles above is shown as before.
This leaves the squares. We can extend such a square by picking a one-sided inverse of one of the fibrations as follows
\[
\begin{tikzcd}
\bullet  \ar[twoheadrightarrow]{r} \ar[hookrightarrow]{d} & \bullet  \ar[hookrightarrow]{d} \ar[-]{ddr}{=}& \\
\bullet  \ar[twoheadrightarrow]{r} \ar[twoheadrightarrow]{drr} & \bullet \ar[twoheadrightarrow]{dr} & \\
 & & \bullet 
\end{tikzcd}
\]
Note also that the one-sided inverse is necessarily surjective, and hence a fibration in our model category. Now we equivalently check that the diagram associated to the outer rim (-triangle) and those associated to the inner triangles homotopy commute. But this is again covered as above.
\end{proof}

\begin{cor}
	The assignment of Proposition \ref{prop:functorial} from weak equivalences between cofibrant $\op Q$-modules to zigzags between the endomorphism operads sends homotopic morphisms to homotopic zigzags.
\end{cor}
\begin{proof}
	One definition of being homotopic for two weak equivalences is that they are the outer rim of a commutative diagram of weak equivalences.
	But by assertion (2) of the proposition it follows that such a diagram is sent to a big diagram of zigzags of operads all of whose faces homotopy commute, and hence the two outer zigzags define homotopic operad morphisms.
\end{proof}

\subsubsection{More explicit construction}\label{sec:funcdirectmap}
Let again $f: \op M \to \op N$ be a weak equivalence between cofibrant $\op Q$-modules.
Under favorable conditions we may in fact define a direct operad quasi-isomorphism between
$\OEnd_{\op Q}(\op M)$ and $\OEnd_{\op Q}(\op N)$. Concretely, suppose $f$ is either a fibration or a cofibration, say a cofibration to start with.
Suppose further that $\op M(0)=\op \op N(0)=\op Q(0)=0$ (as we always assume) and additionally $\op M(1)=\op M(2)=\K$.
Then the operads $\OEnd_{\op Q}(\op M)$ and $\OEnd_{\op Q}(\op N)$ carry a natural augmentation. In particular, a unital operad map $F:\OEnd_{\op Q}(\op M)\to \OEnd_{\op Q}(\op N)$ is defined by specifying a non-unital operad map 
\[
\bar F : \overline{\OEnd}_{\op Q}(\op M)\to \OEnd_{\op Q}(\op N),
\]
where $\overline{\OEnd}_{\op Q}(\op M)\subset \OEnd_{\op Q}(\op M)$ is the augmentation ideal.
We pick a one-sided inverse $g$ to $f$.
Then $\bar F$ can be defined on $u\in \overline{\OEnd}_{\op Q}(\op M)(r)$ as
\[
\bar F(u) = f\circ u \circ g^{\otimes r}.
\]

We leave it to the reader to verify that the honest operad map thus defined is indeed homotopic to the one defined via a zigzag above, and also to write down the dual construction in case $f$ is a fibration instead of a cofibration. (In the latter case one obtains a map in the reverse direction $\OEnd_{\op Q}(\op M)\leftarrow \OEnd_{\op Q}(\op N)$.)

\subsection{Torsors and operad morphisms}

Let now $\op P$ and $\op Q$ be operads in dg vector spaces.
An operadic $\op P$-$\op Q$-bimodule is a bimodule for the monoids $\op P$ and $\op Q$. Concretely, it is a
symmetric sequence $\op M$ together with a map 
\[
\op P \circ \op M\circ \op Q \to \op M
\]
satisfying some associativity and unit axioms. 
The category of operadic $\op P$-$\op Q$-bimodules also carries a (semi-)model structure constructed in \cite{Frbookmod}, by transfer from the standard model structure of (collections in) cochain complexes.

In this section, we will use mostly operadic bimodules of the following type.
\begin{defi}\label{def:torsor}
An operadic right $\op P$-$\op Q$-torsor is an operadic $\op P$-$\op Q$-bimodule $\op M$ satisfying the following conditions.
\begin{itemize}
\item $H(\op M(1))\cong \K$ is one dimensional. We denote some cocycle generating the nontrivial cohomology class by $\bo \in \op M(1)$.
\item The map of right $\op Q$-modules $\op Q\to \op M$ given by the composition with $\bo$, i.e., $q\mapsto \bo \circ q$, is a quasi-isomorphism. 
\end{itemize}
\end{defi}

For simplicity, we will furthermore assume that the operads and bimodules appearing here are reduced, in that $\op P(0)=\op Q(0)=\op M(0)=0$, while $\op P(1)=\op Q(1)=\op M(1)=\K$.
In the reduced case, the data $\bo$ in the above definition is determined uniquely up to multiplication by an unimportant nonzero scalar.
Furthermore, in this setting all operads come equipped with a canonical augmentation.

Now the key point of this Appendix is that any operadic right $\op P$-$\op Q$-torsor $\op M$ encodes an operad map up to homotopy 
\[
\op P \dashrightarrow \op Q.
\]
The construction is as follows. We assume that $\op M$ is cofibrant as a right $\op Q$-module.
If not, we may replace $\op M$ by some cofibrant resolution, respecting the left $\op P$-action.
(For example, one may take the usual bar-cobar resolution as right $\op Q$-module.) 

Next, since we required $\op M(1)=\K$ there is a canonical choice, up to a scalar, of a right $\op Q$-module quasi-isomorphism 
\[
\iota : \op Q \to \op M,
\]
sending the unit to some non-zero element of $\op M(1)$.
Our desired morphism $\op P\dashrightarrow \op Q$, is then encoded by the zigzag of dg operad morphisms
\begin{equation}\label{equ:bimod zigzag def}
\op P \to \OEnd_{\op Q}(\op M) \xleftarrow{\sim} \bullet \xrightarrow{\sim} \OEnd_{\op Q}(\op Q) = \op Q.
\end{equation}
Here the left-hand map is the one naturally induced by the left $\op P$-action on $\op M$.
The right-hand zigzag is the one associated to the morphism $\iota$ according to Proposition \ref{prop:functorial}. Concretely, we can take for the intermediate object $\OEnd(\iota)$. Usually one can also replace the zigzag by a direct operad morphism according to subsection \ref{sec:funcdirectmap}.

\begin{lemma}\label{lem:indep choices}
	The above construction is independent of choices, up to homotopy.
\end{lemma}
\begin{proof}
	Two choices have been made. First $\iota$ is only canonical up to scale. We leave it to the reader to check that this scale does not affect the morphism.
	The second choice is that of a cofibrant resolution of $\op M$ as right $\op Q$-module.
	We claim that any two such give rise to homotopic morphisms.
	It is sufficient to compare an arbitrary resolution to a fixed one, say we take the bar-cobar resolution $\widehat {\op M}$ of $\op M$ as a $\op P$-$\op Q$-bimodule.
	Then this is cofibrant also as $\op P$-$\op Q$-bimodule, and by lifting can assume that our other resolution, say $\op M'$, fits into a diagram of bimodules and actions as follows.
	\begin{equation}\label{equ:welldefineddg}
	 \begin{tikzcd}[column sep=0.5em]				
	\op P \arrow[-]{d}{=} &\aol & \widehat{\op M} \arrow{d}{\sim}[swap]{f} & \aor & \op Q \arrow[-]{d}{=} \\
	\op P & \aol  & \op M' &\aor & \op Q
	\end{tikzcd}
	\end{equation}
	We desire to show that the operad morphisms $\op P\dashrightarrow \op Q$ associated to the upper and lower row are homotopic.
	We first look at the left-hand parts of these morphisms (the maps from $\op P$ into the endomorphism operads of $\widehat{\op M}$, $\op M'$, see \eqref{equ:bimod zigzag def}).
	We also want to use the zigzag between $\OEnd_{\op Q}(\widehat{\op M})$ and $\OEnd_{\op Q}(\op M')$ we get from Proposition \ref{prop:functorial}. Suppose first that $f$ is either a cofibration or a fibration in the category of $\op P$-$\op Q$ bimodules. If $f$ is a fibration then it is a fibration in the category of right $\op Q$-modules as well, since the fibrations are just the surjective maps in either case. If $f$ is a cofibration, then it is a cofibration in right $\op Q$-modules as well, using \cite[Proposition 12.3.2]{Frbookmod}. In either case we have a commutative diagram
	\[
	\begin{tikzcd}
	& \OEnd_{\op Q}(\widehat{\op M}) \ar{dr}&  \\
	\op P \ar{r} \ar{ur}\ar{dr} & \OEnd_{\op Q}(f) \ar{u}{\sim}\ar{d}{\sim} & \OHom_{\op Q}((\widehat{\op M},\op M')  \\
	& \OEnd_{\op Q}(\op M') \ar{ur}& 
	\end{tikzcd}\, .
	\]
	The middle horizontal arrow comes from the universal property of the pullback $\OEnd_{\op Q}(f)$, using that the outer rim of the diagram commutes because $f$ is a map of $\op P$-$\op Q$-bimodules.
	(Note also that the two right-hand morphisms are only morphisms of symmetric sequences, while the other morphisms are morphisms of dg operads.)
	In the case that $f$ is neither a fibration nor a cofibration we factorize it into an acyclic cofibration followed by an acyclic fibration, and apply the above argument so each morphism in turn.
	
	Next we study the right-hand part of our maps $\op P\dashrightarrow \op Q$.
	It comes from the $\op Q$-module maps
	\[
	\begin{tikzcd}
	\widehat{\op M} \ar{d}{\sim}[swap]{f} & \op Q \ar[-]{d}{=} \ar{l}{\hat\iota}\\
	\op M' & \op Q \ar{l}{\iota'}
	\end{tikzcd},
	\]
	where we may pick $\hat\iota$ and $\iota'$ (as above) such that the diagram commutes.
	But then Proposition \ref{prop:functorial} implies that the associated diagram of operad quasi-isomorphisms 
	\[
		\begin{tikzcd}
	 \OEnd_{\op Q}(\widehat{\op M}) & \OEnd_{\op Q}(\hat \iota)  \ar{l}\ar{dr} &  \\
	 \bullet \ar{u}\ar{d} & &\OEnd_{\op Q}(\op Q)=\op Q  \\
	\OEnd_{\op Q}(\op M') &  \OEnd_{\op Q}(\iota) \ar{l}\ar{ur}& 
	\end{tikzcd}
	\]
	homotopy commutes. Hence, taking the left-hand and right-hand parts together, we have shown the Lemma.
\end{proof}

The next important point to check here is that the assignment from operadic right torsors to homotopy classes of maps sends quasi-isomorphic triples
\[
\op P \aol \op M \aor \op Q
\]
to quasi-isomorphic maps.

\begin{prop}\label{prop:mapfunctorial}
Suppose we are given a quasi-isomorphism of operadic $\op P$-$\op Q$ right torsor $\op M$ and an operadic $\op P$'-$\op Q'$ right torsor $\op M'$, i.e., a commuting diagram of maps and actions
\beq{equ:Propdiag}
 \begin{tikzcd}[column sep=0.5em]				
\op P \arrow{d}{\sim} &\aol & \op M \arrow{d}{\sim} & \aor & \op Q \arrow{d}{\sim} \\
 \op P' & \aol  & \op M' &\aor & \op Q'
 \end{tikzcd}\, .
\eeq
Then the induced maps $\op P \dashrightarrow \op Q$ and $\op P' \dashrightarrow \op Q'$ induced by the above construction from the bimodules $\op M$ and $\op M'$ are quasi-isomorphic.
\end{prop}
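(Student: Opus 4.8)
The plan is to exhibit, for the data \eqref{equ:Propdiag}, a square of operads commuting up to coherent homotopy
\[
\begin{tikzcd}[column sep=3em]
\op P \ar{r} \ar{d}{\sim} & \op Q \ar{d}{\sim} \\
\op P' \ar{r} & \op Q'
\end{tikzcd}
\]
in which the vertical maps are the given quasi-isomorphisms and the horizontal maps realize, in the homotopy category of operads and strong homotopy morphisms, the maps $\op P\dashrightarrow\op Q$ and $\op P'\dashrightarrow\op Q'$ associated to $\op M$ and $\op M'$. By the independence-of-choices discussion preceding the Proposition, a horizontal arrow that fits into such a square is automatically homotopic to the canonical one, so it is enough to construct the square.

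First I would resolve functorially. Recall the map $\op P\dashrightarrow\op Q$ is the zigzag $\op P\to\End_{\op Q}(\widehat{\op M})\xleftarrow{\iota(-)\pi}\End_{\op Q}(\op Q)=\op Q$, where $\widehat{\op M}\to\op M$ is a cofibrant resolution of right $\op Q$-modules such that the structure map $\iota\colon\op Q\to\widehat{\op M}$ is a cofibration and $\pi$ is a chosen retraction; one may take for $\widehat{\op M}$ the functorial bar-cobar resolution of \cite{CW}. Naturality of this resolution in the pair (right module, base operad) turns the square \eqref{equ:Propdiag} into a compatible quasi-isomorphism $\widehat{\op M}\to\widehat{\op M}'$ that is right-$\op Q$-linear on the source, right-$\op Q'$-linear on the target via $\op Q\to\op Q'$, intertwines the $\op P$- and $\op P'$-actions via $\op P\to\op P'$, and is compatible with $\iota$ and $\iota'$.

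The crux, which I expect to be the main obstacle, is that $\End_{\op Q}(\widehat{\op M})$ and $\End_{\op Q'}(\widehat{\op M}')$ live over \emph{different} base operads and so admit no direct comparison map. The remedy is base change along the quasi-isomorphism $\op Q\to\op Q'$. Extension of scalars $\phi\mapsto\phi\circ_{\op Q}\id_{\op Q'}$ is an honest operad map $\End_{\op Q}(\widehat{\op M})\to\End_{\op Q'}(\widehat{\op M}\circ_{\op Q}\op Q')$, and one checks that it is compatible with the left-module-induced maps from $\op P$ (now viewed as acting on $\widehat{\op M}\circ_{\op Q}\op Q'$) and restricts to the given map $\op Q\to\op Q'$ on the distinguished copies of the base operad. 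Since $\op Q\to\op Q'$ is a quasi-isomorphism, the natural comparison $\widehat{\op M}\circ_{\op Q}\op Q'\to\widehat{\op M}'$ is a quasi-isomorphism of cofibrant right $\op Q'$-modules, compatible with the $\op P'$-action and with the maps from $\op Q'$; it therefore has a homotopy inverse, so Construction \ref{cons:homotopymap} upgrades it to a strong homotopy quasi-isomorphism of operads $\End_{\op Q'}(\widehat{\op M}\circ_{\op Q}\op Q')\xrightarrow{\sim}\End_{\op Q'}(\widehat{\op M}')$, which — for a compatible choice of the retractions $\pi$, $\pi'$, possible because all modules involved are cofibrant and the comparison maps are cofibrations or carry homotopy inverses — is again compatible with the distinguished copies of $\op Q'$. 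Composing these two maps yields the middle column of the ladder, together with its compatibility with both the $\op P$-side and the $\op Q$-side.

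The genuine difficulty is thus not any individual map but the bookkeeping: tracking which of these squares commute on the nose and which only up to coherent homotopy, and checking that every comparison map respects both the left and the right module structures, so that the outer columns of the assembled ladder are indeed $\op P\xrightarrow{\sim}\op P'$ and $\op Q\xrightarrow{\sim}\op Q'$. I would handle this by working throughout in the homotopy category of operads and strong homotopy morphisms, in which every quasi-isomorphism above becomes invertible; the required homotopy-commutative square is then obtained by splicing the middle column just constructed between the two zigzags defining $\op P\dashrightarrow\op Q$ and $\op P'\dashrightarrow\op Q'$ and reading off its outer columns, and the Proposition follows.
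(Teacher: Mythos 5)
Your overall architecture is reasonable and is in fact an alternative to the paper's, but there is a genuine gap in the middle and the bookkeeping you defer is exactly where the actual work lies.

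The paper's proof first ``extrudes'' the diagram~\eqref{equ:Propdiag} into three ladders, each having two vertical arrows equal to the identity, and then treats the three cases separately: functoriality in $\op P$ is obvious; changing only $\op M$ is the independence-of-resolution argument (diagram~\eqref{equ:welldefineddg}); and functoriality in $\op Q$ is the genuinely hard case. For the last one the paper takes the map $\End_{\op Q'}(\op M)\to\End_{\op Q}(\widehat{\op M})$ coming from \emph{restriction} of scalars plus functoriality of the bar--cobar resolution, and then establishes homotopy-commutativity of the square~\eqref{equ:htpydiag} by making \emph{very specific choices} of the lifting $\omega$, the retraction $\hat\pi$, and the homotopy $H$ satisfying \eqref{equ:omegaprop}, \eqref{equ:omegaprop2}, \eqref{equ:Hprop}, ultimately computing that the two strong homotopy compositions $F_u$ and $F_l$ are literally equal to the given map $f\colon\op Q\to\op Q'$. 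You instead propose to use \emph{extension} of scalars $\widehat{\op M}\mapsto\widehat{\op M}\circ_{\op Q}\op Q'$; this is an adjoint move and is a legitimate alternative route, and your observation that the base-change map $\End_{\op Q}(\widehat{\op M})\to\End_{\op Q'}(\widehat{\op M}\circ_{\op Q}\op Q')$ is honest (not merely strong-homotopy) and restricts to $f$ on the distinguished copies of the base operad is correct and nice; it avoids the asymmetry in the paper's diagram~\eqref{equ:htpydiag}.

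The gap: at the step ``for a compatible choice of the retractions $\pi$, $\pi'$, possible because all modules involved are cofibrant and the comparison maps are cofibrations or carry homotopy inverses --- is again compatible with the distinguished copies of $\op Q'$,'' you assert the crux rather than prove it. Applying Construction~\ref{cons:homotopymap} to $\widehat{\op M}\circ_{\op Q}\op Q'\to\widehat{\op M}'$ requires a choice of $\op Q'$-linear homotopy $H$, and the resulting strong homotopy operad map $\End_{\op Q'}(\widehat{\op M}\circ_{\op Q}\op Q')\to\End_{\op Q'}(\widehat{\op M}')$ has a nonzero quadratic component $\Phi_2(\alpha,\beta)=f\alpha\circ H\beta g$. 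Whether the distinguished maps $\op Q'\to\End_{\op Q'}(-)$ are intertwined \emph{as strong homotopy maps} by this $\Phi$ depends on the interaction of $H$ with $\iota'$ and $\pi'$; a generic $H$ will not work. This is precisely what the conditions \eqref{equ:omegaprop}--\eqref{equ:Hprop} engineer in the paper, and the subsequent computation that $F_u$ has vanishing quadratic component is where the Proposition is actually proved. ``Working throughout in the homotopy category'' does not resolve this: inverting quasi-isomorphisms of operads does not by itself make a square of strong homotopy operad maps commute, and the content of the Proposition is exactly that a specific square does. To close the gap you would need to either (i) reduce to the paper's three cases and supply the explicit homotopies in the $\op Q$-functoriality step, or (ii) show that the homotopy $H$ in Construction~\ref{cons:homotopymap} can be chosen to vanish on the image of $\iota'$ and to send $\iota'(q')$-type elements into a complement of the image of $\pi'$, along the lines of \eqref{equ:Hprop}. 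Without such an argument the proposal does not constitute a proof.

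Finally, a smaller point: $\widehat{\op M}\circ_{\op Q}\op Q'$ carries a left $\op P$-action, not a left $\op P'$-action, so compatibility ``with the $\op P'$-action'' on $\widehat{\op M}'$ is not immediate as written --- you implicitly also need the $\op P$-functoriality reduction. The paper's extrusion into three ladders cleanly separates this concern; without it, the base-change comparison and the left-module comparison interfere with each other and have to be tracked simultaneously.
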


\begin{proof}
For simplicity, we may reduce the problem to the case when two of the three vertical arrows are the identity, by "extruding" the diagram as follows.
\[
 \begin{tikzcd}[column sep=0.5em]				
 \op P \arrow{d}{=} &\aol & \op M \arrow{d}{\sim} & \aor & \op Q \arrow{d}{=} \\
 \op P \arrow{d}{\sim} &\aol & \op M' \arrow{d}{=} & \aor & \op Q \arrow{d}{=} \\
 \op P' \arrow{d}{=} &\aol & \op M' \arrow{d}{=} & \aor & \op Q \arrow{d}{\sim} \\
 \op P' & \aol  & \op M' &\aor & \op Q'
 \end{tikzcd}
\]
Here we have used the left and right vertical maps of \eqref{equ:Propdiag} to define the left and right $\op P$ and $\op Q$-module structures on $\op M'$.

To begin with the easiest case, the functoriality in $\op P$ is obvious, i.e., if the middle and right vertical arrows of \eqref{equ:Propdiag} are identities then we clearly have a commutative diagram of operad maps
\[
\begin{tikzcd}
\op P \ar{dd} \ar{dr} & & \\
& \OEnd_{\op Q}(\op M) & \ar[swap]{l}{\sim} \bullet \ar{r}{\sim} & \OEnd_{\op Q}(\op Q)= \op Q\\
\op P' \ar{ur} & &
\end{tikzcd}\, .
\]
Secondly, assume that both the left and right vertical arrows of \eqref{equ:Propdiag} are identities. Then the proof of Lemma \ref{lem:indep choices} (cf. diagram \eqref{equ:welldefineddg}) shows that the upper and lower row yield homotopic morphisms.

Finally we check functoriality in $\op Q$, i.e., we assume that the left and middle vertical maps in \eqref{equ:Propdiag} are identities, and we are hence in the situation
\[
 \begin{tikzcd}[column sep=0.5em]				
\op P' \arrow{d}{=} &\aol & f^*\op M' \arrow{d}{\cong} & \aor & \op Q \arrow{d}[swap]{\sim}{f} \\
\op P' & \aol  & \op M' &\aor & \op Q'
\end{tikzcd}\, .
\]
To be clear we have made the use of the restriction functor $f^*$ explicit, which creates the $\op Q$-module $f^*\op M'$ from the $\op Q'$-module $\op Q'$.
We desire to show that the two operad morphisms $\op P'\dashrightarrow \op Q$ and $\op P\dashrightarrow \op Q'$ associated to the upper and lower row are quasi-isomorphic.
To this end we choose a cofibrant replacement $\op N$ of $f^*\op M'$ as right $\op Q$-module, 
say for concreteness by the bar-cobar construction
\[
\op N = \Barc_{\op Q}\Bar_{\op Q}(f^*\op M').
\]
We also choose a weak equivalence of right $\op Q$-modules $\iota : \op Q\to \op M$.
Then we apply the induction functor $\Ind_{\op Q}^{\op Q'}$ to obtain a map of $\op Q'$ modules (with $\iota'=\Ind_{\op Q}^{\op Q'} \iota$)
\[
\Ind_{\op Q}^{\op Q'} \op N \xleftarrow{\iota'} \Ind_{\op Q}^{\op Q'} \op Q = \op Q'.
\]
We note that these $\op Q'$-modules are also cofibrant since $\Ind_{\op Q}^{\op Q'}$ is left Quillen.
Furthermore, $\Ind_{\op Q}^{\op Q'}$ is a (dg) functor, and respects the presence of the left $\op P'$-action. Furthermore, again by functoriality and the fact that induction commutes with tensor products, it induces a map between the endomorphism operads.
Hence we obtain a commutative diagram of dg operad morphisms
\[
\begin{tikzcd}
\op P' \ar[-]{d}{=} \ar{r}& \OEnd_{\op Q}(\op N) \ar{d}& \OEnd_{\op Q}(\iota) \ar{r}{\sim}\ar{l}[swap]{\sim}\ar{d}& \OEnd_{\op Q} (\op Q)=\op Q \ar{d}{f}[swap]{\sim}\\
 \op P' \ar{r} & \OEnd_{\op Q}(\Ind_{\op Q}^{\op Q'}\op N) & \OEnd_{\op Q}(\iota') \ar{r}{\sim}\ar{l}[swap]{\sim}& \OEnd_{\op Q'} (\op Q')=\op Q'
\end{tikzcd}\, .
\]
Since $f$ is a quasi-isomorphism it follows that all vertical maps are as well.
Hence the upper and lower row form quasi-isomorphic operad morphisms.
The lower row is induced from the upper row of the diagram of bimodules
\[
\begin{tikzcd}[column sep=0.5em]				
\op P' \arrow[-]{d}{=}&\aol & \Ind_{\op Q}^{\op Q'}(\Barc_{\op Q}\Bar_{\op Q}(f^* \op M')) \arrow{d}{\simeq}  & \aor & \op Q' \arrow[-]{d}{=} \\
\op P' &\aol & \Barc_{\op Q'}\Bar_{\op Q'} \op M'   & \aor & \op Q' \\
\end{tikzcd}\, .
\]
The middle vertical arrow is obtained in the following two steps. First one notes that the bar-cobar construction is compatible with resrictions in the sense that there is a natural map of $\op P'$-$\op Q$-bimodules
\[
\Barc_{\op Q}\Bar_{\op Q}(f^* \op M') \to f^*(\Barc_{\op Q'}\Bar_{\op Q'}(f^* \op M')).
\]
Then one applies the counit of the induction-restriction adjunction to obtain the stated morphism. Invoking again the proof of Lemma \ref{lem:indep choices} we see that the upper and lower row here induce homotopic morphisms and hence we are done.

\end{proof}

\subsection{The operad map encoded by $\ESC_f$}
Suppose that $f: \op S\to \op T$ is a map of reduced topological $\La$-operads.
We associate to this map a two colored operad $\ESC_f$ as before.
We consider the operads of chains $\op P= C(\op S)$, $\op Q=C(\op T)$.
Furthermore, we consider the operadic right $\op P$-$\op Q$-torsor $\op M=C(\ESC_f(\bullet,0))$.
Then, more or less trivially, one observes the following result.
\begin{prop}
In the aforementioned setting, the operad map $\op P\to \op Q$ encoded by $\op M$ by the construction of the previous subsection is homotopic to the map induced by $f$. 
\end{prop}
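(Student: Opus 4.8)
The plan is to unwind both operad maps back to a common, explicit combinatorial model and check they coincide there. On the one hand, the map $\op P \to \op Q$ encoded by $\op M = C(\ESC_f(\bullet,0))$ is, by the construction of Appendix \ref{app:torsors}, the zigzag $\op P \to \End_{\op Q}(\widehat{\op M}) \xleftarrow{\sim} \End_{\op Q}(\op Q) = \op Q$, where $\widehat{\op M}$ is a cofibrant replacement of $\op M$ as a right $\op Q$-module and the left-hand arrow sends $p\in\op P$ to the endomorphism ``act by $p$ on the color-$2$ slots''. On the other hand, $f\colon \op S\to\op T$ induces $C(f)\colon \op P\to\op Q$ directly. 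The first step is to identify what $\op M = C(\ESC_f(\bullet,0))$ actually is: from Remark \ref{rem:ESCfESCmn} (or the pullback definition \eqref{equ:ESCpullback} evaluated at $(\bullet,0)$) we have $\ESC_f(k,0)$ sitting in a (homotopy) pullback over $\op Q_{triv}^2(k,0)=\op Q(0)=*$, so that $\ESC_f(k,0)\simeq \op P(k)\times \op Q_{paint}^2(k,0) = \op P(k)\times \op Q(k)$. Thus as a right $\op Q$-module, $\op M$ is (a model for) $\op P \times \op Q$ with the $\op Q$-action only on the second factor and the left $\op P$-action only on the first, and the canonical inclusion $\iota\colon\op Q\to\op M$ is $q\mapsto (\bo_{\op P}, q)$ — i.e. it is split already, with a tautological right inverse $\pi = \mathrm{pr}_2$.

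The key observation is then that with this splitting, the map $\op P\to\End_{\op Q}(\op M)\xleftarrow{\iota(-)\pi}\op Q$ becomes entirely transparent: $p\in\op P$ acts by $(p', q)\mapsto (p\circ p', q)$, and $\iota(-)\pi$ sends $q\in\op Q$ to $(p',q')\mapsto(p', q\circ q')$. But the compatibility of left $\op P$- and right $\op Q$-actions on $\op M$ — which is exactly the operadic bimodule structure inherited from the fact that $\ESC_f$ is an honest operad, plus the way $\op P_{triv}\to\op Q_{triv}\leftarrow\op Q_{paint}$ is built — forces $p\circ (p', q)$ and $\iota(C(f)(p))\circ (p',q)$ to agree on cohomology: on the first factor $p\circ p'$ versus $p'$ times the ``fill with units'' map, and on the second factor $q\mapsto C(f)(p)\circ q$. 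Concretely, the map $\op P_{paint}\to\op P_{triv}$ of Section \ref{sec:ESC construction} is the $\La$-operation filling the first inputs with the unit, and the composite $\op Q_{paint}\to\op Q_{triv}$, $\op Q(k)\to\op Q(0)=*$, together with $f$, is precisely what records the data of $C(f)$ on the $\op Q$-side of the pullback. Chasing the operad structure on the pullback $\ESC_f$ through this diagram shows that the left $\op P$-action on $\op M$ factors, up to the splitting, through $C(f)$ followed by the right $\op Q$-action — which is the statement that the two operad maps agree.

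The remaining step is to promote this ``agree on the nose in this model'' statement to ``homotopic as the construction is designed to output''. Since the construction of Appendix \ref{app:torsors} is, by the discussion around diagram \eqref{equ:welldefineddg} and Proposition \ref{prop:mapfunctorial}, independent up to homotopy of the choice of cofibrant replacement $\widehat{\op M}$ and of the right inverse $\pi$, it suffices to compute it with \emph{any} convenient choice; we choose the tautological splitting above (after, if necessary, replacing $\op M$ by a cofibrant resolution $\widehat{\op M}$ and transporting the splitting along the acyclic fibration $\widehat{\op M}\to\op M$, using Construction \ref{cons:homotopymap} to handle that the transported right inverse is only a homotopy right inverse). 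Then the computation just performed identifies the output with $C(f)$ exactly. I expect the main obstacle to be the bookkeeping in the second step: making precise that the operadic bimodule structure on $\op M = C(\ESC_f(\bullet,0))$, pulled out of the honest operad structure on $\ESC_f$ via \eqref{equ:ESCpullback}, really is the ``product'' bimodule twisted by $C(f)$ and not something that merely looks like it arity-wise — this requires tracking the partial compositions $\ESC_f(k,0)\circ_i \op Q(\bullet) \to \ESC_f(\bullet,0)$ through the pullback and checking they are governed by the map $f$ composed with the $\op Q_{paint}\to\op Q_{triv}$ leg. Once that identification is in hand, the homotopy-invariance of the Appendix construction does the rest.
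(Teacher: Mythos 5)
Your plan has the right shape — choose an explicit model for $\op M$, exhibit a right inverse $\pi$ tautologically, then identify the resulting zigzag with $C(f)$ — but it rests on a concrete miscomputation of $\op M$ that invalidates the subsequent argument.

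You claim that evaluating the pullback \eqref{equ:ESCpullback} at $(k,0)$ gives $\ESC_f(k,0)\simeq\op P(k)\times\op Q(k)$, so that $\op M$ is a "product" right torsor. This is wrong. By the paper's definition, $\op P_{triv}^2(k,l)=\op P(l)$, so $\op P_{triv}^2(k,0)=\op P(0)=*$, not $\op P(k)$ as you implicitly use (note you apply the same convention correctly to $\op Q_{triv}^2(k,0)=\op Q(0)$, so the slip is internally inconsistent). The pullback is therefore $*\times_*\op Q_{paint}^2(k,0)=\op Q(k)$; this matches Remark~\ref{rem:ESCfESCmn} specialized to $l=0$, which reads $\ESC_{m,n}(k,0)=\lD_m(0)\times_{\lD_n(0)}\lD_n(k)=\lD_n(k)$. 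Thus $\op M\cong\op Q$ on the nose, and the canonical inclusion $\op Q\to\op M$ is the identity. Your proposed model $\op P\times\op Q$ (on chains, $\op P\otimes\op Q$) would not even be a right torsor — its cohomology is $H(\op P)\otimes H(\op Q)$, so the map $\op Q\to\op M$ would fail to be a quasi-isomorphism, contradicting Definition~\ref{def:torsor}.

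Once $\op M=\op Q$ is in hand, the proof is considerably simpler than what you sketch: one takes any cofibrant resolution $\widehat{\op M}\to\op M=\op Q$ of right $\op Q$-modules, chooses $\pi$ to be exactly this projection, and observes the commuting triangle $\op P\to\End_{\op Q}(\widehat{\op M})\leftarrow\End_{\op Q}(\op Q)=\op Q$ with the diagonal $\op P\to\End_{\op Q}(\op M)=\op Q$ being precisely the left $\op P$-action on $\op M\cong\op Q$, which is $C(f)$. No product decomposition, no Construction~\ref{cons:homotopymap}, and no separate tracking of the bimodule structure through the pullback diagram are needed. Your final "bookkeeping" step — verifying that the bimodule structure is "the product bimodule twisted by $C(f)$" — would have led you astray; what you actually need to verify is merely that the left $\op P$-action on $\op Q\cong\ESC_f^2(\bullet,0)$ equals the pullback of the regular left action along $C(f)$, which is immediate from the construction.
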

\begin{proof}
In this case $\op M = \op Q$, hence it is already cofibrant as right $\op Q$ module, and our zigzag \eqref{equ:bimod zigzag def} becomes very simple:
\[
\begin{tikzcd}
\op P \ar{r} & \OEnd_{\op Q}(\op M) = \OEnd_{\op Q}(\op Q)= \op Q 
\end{tikzcd}\, .
\] 
Concretely, the map sends and $p\in \op P(r)$ to the left action $p\cdot (\bo,\dots,\bo)\in \op Q(r)$ on operadic units of $\op Q$. But if we compare to the definition of that left action in the construction of $\ESC_f$ in section \ref{sec:ESCfdef}, then we see that that left action is just composition with the image under $f$. Hence it is clear that our map is the same is (the chains map induced by) $f$.
\end{proof}

\bibliographystyle{plain}

\end{document}